\documentclass{amsart}
\usepackage{ifpdf}
\usepackage{amsmath, amsthm, amsfonts, amssymb, amscd}
\usepackage{hyperref}
  \usepackage{epsfig} 


\newtheorem{thm}{Theorem}

\newtheorem{lemma}[thm]{Lemma}
\newtheorem{proposition}{Proposition}

\theoremstyle{definition}

\newcommand{\R}{\mathbb{R}}

\newcommand{\N}{\mathbb{N}}

\DeclareMathOperator{\supp}{supp}

\title[Maximizing measures supported on a periodic orbit]
      {Density of the set of endomorphisms with a maximizing measure supported on a periodic orbit}

\author[T. C. Batista, J. S. Gonschorowski and F. A. Tal]{T. Batista, J. Gonschorowski and F. Tal}

\subjclass{Primary: 37A05, 37B99.}
 \keywords{Maximizing Measures, Periodic Orbits}

 \email{tbatista@ime.usp.br}
\email{jsg@ime.usp.br}
 \email{fabiotal@ime.usp.br}

\thanks{The first and second authors were supported by CNPq, the third author was partially supported by CAPES, CNPq, FAPESP}

\begin{document}
\maketitle





\begin{abstract}
Let $M$ be a  compact $n$-dimensional Riemanian manifold, End($M$) the set of the endomorphisms of $M$ with the usual $\mathcal{C}^0$ topology and $\phi:M\to\R$ continuous. 
We prove, extending the main result of \cite{ATnonlin}, that there exists a dense subset of $\mathcal{A}$ of End($M$) such that, if $f\in\mathcal{A}$, there exists a $f$ invariant measure $\mu_{\max}$ supported on a periodic orbit that maximizes the integral of $\phi$ among all $f$ invariant Borel probability measures.
\end{abstract}

\section{Introduction}

A relatively new field of study, {\it ergodic optimization} has displayed under a new point of view several distinct problems in dynamical systems, and enjoyed the benefits of allying techniques from optimization theory and ergodic theory to address  them.  Its usual setup is a dynamical system $f:X\to X$, where $X$ is a topological space, and a potential function $\phi:X\to\R$, and the prototypical problem in the field is to determine, among all $f$ invariant Borel probability measures $\mathcal{M}_{inv}(f)$, if there exists measures that maximize the functional $P_{\phi}:\mathcal{M}_{inv}(f)\to\R,\, P_\phi(\mu)=\int \phi d\mu$ and to further characterize these maximizing measures in term of their support.

Several problems can be put under this context, like finding Lyapunov exponents, action minimizing solutions to Lagrangian systems and the zero temperature limits of Gibbs equilibrium states in thermodynamical formalism. Some of the first ideas of the field appeared in the early work \cite{CG} and a very good introduction to the subject is \cite{jenkinsondcds}, where the fundamental results of the theory are displayed alongside the main lines of research.
One of these research lines seeks to determine, when $X$ is compact and $\phi$ is continuous, what are the typical support of the maximizing measures (note that the existence of at least one maximizing measure is assured in this case by the compactness of the set of invariant probability measures in the weak-* topology). This is inspired by the classical conjecture of Ma\~n\'e that, generically, the measures that minimize the Lagrangian action in Lagrangian flows are supported in periodic orbits. 

There are some different conceptual approaches to this question. First, one may be interested in a specific dynamical property (as, for instance, Lyapunov exponents or rotation numbers) and so the potential is determined by the choice of the dynamical system. Examples of this are \cite{jenkinsonmorris, garibaldinonlin, CDI, CLT}. Another approach, followed for instance in \cite{bouschwalters, Lopesasterisque, Morrisdcds2000}, involves fixing a dynamical system $f$, usually with some specific dynamical condition like hiperbolicity or expansiveness, and varying the potential in a suitable space.

In this work we follow yet a different line, searching to understand how the maximizing measures behave when the potential is fixed, but the dynamics are allowed to change in a given space. In \cite{ATfundamenta} it is shown that,  if $M$ is a compact Riemannian manifold of dimension $n\ge 2$, then for any continuous $\phi:M\to \R$ there exists a dense set of homeomorphisms of $M$ with a maximizing measure supported on aperiodic orbit, but in \cite{ATDCDS} it is shown that this set is meager. And in \cite{ATnonlin} it is shown that for a dense set of endomorphisms of the circle, there exists a $\phi$ maximizing measure supported on a periodic orbit. In this note we extend this last result, showing that

\begin{thm}\label{maintheorem}
Let $M$ be a compact Riemannian manifold and $\phi_0:M\to\R$ continuous. Then there exists a dense subset $\mathcal{A}$ of End($M$) such that, for every $f\in\mathcal{A}$ there exists a $\phi_0$ maximizing measure supported on a periodic orbit.
\end{thm}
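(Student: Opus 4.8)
The plan is to prove density directly: given $f_0 \in$ End($M$) and $\varepsilon>0$, I will construct $f$ with $d_{C^0}(f,f_0)<\varepsilon$ admitting a $\phi_0$-maximizing measure carried by a periodic orbit, so that $\mathcal{A}$ may be taken to be the (then dense) set of all endomorphisms with this property. Put $\beta_0:=\max\{\int\phi_0\,d\mu:\mu\in\mathcal{M}_{inv}(f_0)\}$, attained by weak-$*$ compactness. If $f_0$ already has a maximizing measure on a periodic orbit there is nothing to do; otherwise fix, via the ergodic decomposition, an ergodic maximizing measure $\mu_0$, and observe that, being ergodic and not supported on a single periodic orbit, $\mu_0$ gives zero mass to the $f_0$-invariant set of periodic points, so $\mu_0$-almost every point is non-periodic.

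First I would locate an orbit segment of $f_0$ to ``close up''. By Birkhoff's ergodic theorem (applied to $\phi_0$ and to a countable family of continuous functions generating the weak-$*$ topology) together with Poincaré recurrence, $\mu_0$-almost every $x$ satisfies $\frac1n\sum_{k=0}^{n-1}\phi_0(f_0^k x)\to\beta_0$ and $\liminf_n d(f_0^n x,x)=0$. Fixing small $\delta,\rho>0$, pick such an $x$ that is non-periodic and then an $N$ with $d(f_0^N x,x)<\rho$, with $x_k:=f_0^k(x)$ ($0\le k\le N-1$) pairwise distinct, and with $\frac1N\sum_{k=0}^{N-1}\phi_0(x_k)\ge\beta_0-\delta$; for the maximality step below I will in fact want this average to be at least $\beta_0$, which should be obtainable from the subadditivity of $n\mapsto\max_x\sum_{k<n}\phi_0(f_0^kx)$ (whose Fekete limit is exactly $\beta_0$) together with a careful choice of the segment. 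Now fix $r>0$ so small that $\overline{B(x_{N-1},r)}$ is disjoint from $\{x_0,\dots,x_{N-2}\}$ and that $f_0(\overline{B(x_{N-1},r)})$ lies in a small coordinate ball around $x_N=f_0^N x$ (which is $\rho$-close to $x_0$). Define $f$ to equal $f_0$ off $B(x_{N-1},r)$, to equal the constant $x_0$ on $B(x_{N-1},r/2)$, and to interpolate, in that coordinate chart, between $x_0$ and $f_0|_{\partial B(x_{N-1},r)}$ on the intervening annulus. Then $f$ is a continuous endomorphism with $d_{C^0}(f,f_0)\le\rho+\mathrm{diam}\,f_0(B(x_{N-1},r))<\varepsilon$, and since $x_{N-1}\mapsto x_0$ while $x_i\mapsto x_{i+1}$ for $i<N-1$, the set $\mathcal{O}:=\{x_0,\dots,x_{N-1}\}$ is a genuine periodic orbit of $f$ with $\int\phi_0\,d\mu_{\mathcal O}=\frac1N\sum_k\phi_0(x_k)$. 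This step uses essentially that endomorphisms need not be injective, so a ball may be collapsed onto a point.

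The heart of the argument is to show that $\mu_{\mathcal O}$ is maximizing for $f$. Let $\nu$ be ergodic $f$-invariant. If $\nu(B(x_{N-1},r))=0$ then $\nu$ is supported where $f=f_0$, hence $\nu$ is $f_0$-invariant and $\int\phi_0\,d\nu\le\beta_0\le\int\phi_0\,d\mu_{\mathcal O}$. If $\nu$ charges $B(x_{N-1},r)$, then $\nu$-almost every orbit enters it with positive frequency; one step after each entry the orbit lies in $B(x_0,\rho+Lr)$ ($L$ a Lipschitz constant for $f_0$) and then follows $f_0$ until the next entry, and any orbit ever reaching $B(x_{N-1},r/2)$ lands on $\mathcal{O}$ forever, forcing $\nu=\mu_{\mathcal O}$. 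For $r,\rho$ small the remaining excursions between consecutive visits either shadow $\mathcal{O}$ for one or more full loops --- contributing $\phi_0$-average $\approx\int\phi_0\,d\mu_{\mathcal O}$ --- or are very long, hence (using again that $\tfrac1T\max_x\sum_{k<T}\phi_0(f_0^k x)\downarrow\beta_0$) have $\phi_0$-average $\le\beta_0+o(1)$ as $\rho\to0$. Assembling these contributions gives $\int\phi_0\,d\nu\le\max\{\beta_0,\int\phi_0\,d\mu_{\mathcal O}\}+o_{r,\rho}(1)$, and with the segment average $\ge\beta_0$ this forces $\int\phi_0\,d\nu\le\int\phi_0\,d\mu_{\mathcal O}$; thus $\mu_{\mathcal O}$ is maximizing and $f\in\mathcal{A}$.

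The main obstacle I anticipate is precisely this last point: extracting \emph{exact} maximality, i.e.\ ruling out every invariant measure of $f$ that exploits the perturbation region or lingers near $\mathcal{O}$ without being carried by it, under the constraint that only a $C^0$-small, spatially localized change is permitted --- for a general continuous $f_0$, possibly expanding or with critical points, one cannot turn $\mathcal{O}$ into a true attractor, so all the control must come from the excursion geometry and from forcing the segment average to be at least $\beta_0$. Should a single perturbation fail to dominate every near-maximizing $f_0$-invariant measure that avoids $B(x_{N-1},r)$, I would iterate, closing up a maximizing measure at stage $k$ with a perturbation of size $\varepsilon 2^{-k}$, arranging the perturbation supports to be mutually disjoint so that earlier periodic orbits persist, and using upper semicontinuity of $g\mapsto\beta(g)$ in the $C^0$ topology to control the limit; the delicate issue is then to ensure the process actually outputs a maximizing \emph{periodic} orbit rather than merely a weak-$*$ limit of periodic-orbit measures.
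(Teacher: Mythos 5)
Your overall strategy (close up a near-recurrent orbit segment of a maximizing measure by a $C^0$-small perturbation, then argue the resulting periodic orbit carries a maximizing measure) is the same as the paper's, but the proposal has a genuine gap exactly at the point you flag, and the paper's proof is essentially the machinery needed to fill it. Two concrete problems. First, obtaining a recurrent segment with average $\ge\beta_0$ (not merely $\ge\beta_0-\delta$): Birkhoff plus Poincar\'e recurrence only give averages converging to $\beta_0$ along a recurrence subsequence, and each finite segment may have average strictly below $\beta_0$; the Fekete/subadditivity bound on $\max_x S_n\phi_0$ is of no help because the maximizing $x$ at time $n$ need not be recurrent. The paper gets this via Atkinson's lemma (Lemma \ref{at}): if the sum over the first return is negative, a later return with larger total sum splits as a sum over two returns, one of which must be nonnegative. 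Second, and more seriously, your maximality argument only yields $\int\phi_0\,d\nu\le\int\phi_0\,d\mu_{\mathcal O}+o_{r,\rho}(1)$, which is not maximality. An ergodic $\nu$ of the perturbed map can live on orbits that repeatedly pass through the annulus $B(x_{N-1},r)\setminus B(x_{N-1},r/2)$, get reinjected near $x_0$, and shadow one loop of $\mathcal O$ with a per-loop average \emph{strictly larger} than that of $\mathcal O$ itself (the closed-up point need not maximize the return average over the perturbation support). No choice of small $r,\rho$ kills this excess, and your fallback of iterating $\varepsilon2^{-k}$-perturbations produces, as you suspect, only a weak-$*$ limit of periodic-orbit measures, which is exactly the situation of \cite{ATfundamenta} where the limit measure is aperiodic.

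The paper resolves this by making the comparison exact rather than approximate. It defines $\overline c$ as the supremum of the return-time averages $\frac1nS_nf(z)$ over \emph{all} points of the closed perturbation ball and all return times (reduced to finitely many $n\le m_0$ by Proposition \ref{jproposition}, since long segments have average $\le a_0/2<\overline c$), and closes up a point that \emph{attains} $\overline c$. When that maximizer returns to the boundary and cannot be closed up directly (Case II(b)), one must close up a nearby interior point $\alpha$ with $\psi(\alpha)<\psi(z_{\max})$, and then precisely your problematic measures appear: orbits lingering near $z_{\max}$ beat $\alpha$. The paper's second perturbation $T_2$ turns $\alpha$ into a radial source whose expansion rate is calibrated against the modulus of continuity $P(s)=\sup_{B_s(\alpha)}\psi-\psi(\alpha)$ (via the sequences $s_k$, $r_k$ with $P(s_k)=2^{-k}P(R_1)$), so that an orbit making $k$ consecutive returns inside $W_\alpha$ must start within $s_k$ of $\alpha$ and its accumulated excess over $\psi(\alpha)$ is summable, hence $O(1/k)$ in time average. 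This quantitative escape-rate construction is the missing idea; without it, or the exact attainment of $\overline c$, the argument does not close.
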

Where End($M$) is the set of continuous surjections of $M$ endowed with the $\mathcal{C}^0$ metric, $d(f,g)=\sup_{x\in M}d(f(x),g(x))$.

The strategy of the proof, similar to the one used in \cite{ATnonlin}, is to make a series of local perturbations  in order to obtain a periodic source with large $\phi_0$ average while controlling the Birkhoff averages of the return map to the perturbation support. The proof of \cite{ATnonlin} relied on the local ordered structure of the domain, particularly in the definition of the support of the perturbations and in controlling the Birkhoff averages, two key points that were not adaptable to higher dimensions. In here we dealt with these difficulties by supporting perturbations in convex sets and analyzing the maximal Birkhoff sums on homothetic copies of the perturbation support, and by controlling the radial rate of escape from the periodic source. 

The paper is organized as follows: In the next section we present some preliminary lemmas and notations, and in section 3 prove the theorem. Since the argument is perturbative, for a given endomorphism we analyze several possibilities, each dealt with in a different subsection, and show for each possibility how to construct a
perturbed endomorphism close to $f$ with the desired property.

\section{Preliminaries}
We start with some notations and preliminary results. Let $M$ be a compact Riemannian manifold and End($M$) the set of endomorphisms of $M$, its continuous surjections. We endow End($M$) with its usual topology of uniform convergence and define the metric $d(f,g)= \sup_{x\in M}(d(f(x), g(x)),\, f,g \in \hbox{End(}M)$.

Given $f\in\hbox{End(}M)$ we denote by  $\mathcal M_{inv}(f)$ the set of $f$ invariant Borel probability measures, which is non-empty, convex  and also compact in the weak-* topology. The subset of ergodic measures of $\mathcal{M}_{inv}(f)$ is denoted by $\mathcal M_{erg}(f)$.

Given $\phi:M\to\R$ continuous and $f\in \hbox{End(}M)$, we define $P_\phi:\mathcal{M}_{inv}(f)\to\R, \, P_{\phi}(\mu)=\int \phi d\mu$. As the functional $P_\phi$ is affine and  $\mathcal M_{inv}(f)$ is a convex compact set, $P_\phi$ must have a maximum point at an extremal point of  $\mathcal M_{inv}(f)$. Since the extremal points of  $\mathcal M_{inv}$ are precisely the ergodic measures, there exists  some $\mu_{\max} \in \mathcal M_{erg}(f)$ that maximizes $P_\phi$. We denote $S_nf(x):=\displaystyle \sum_{i=0}^{n-1}\phi(f^i(x))$ to the $n$ Birkhoff sum of $\phi$.

The following lemma is a direct consequence of Atkinson's Lemma (see \cite{atkinson})
\begin{lemma}\label{atik}
Let $\phi:M\rightarrow \R$ be a continuous function, $f\in $End($M)$ and $\mu \in \mathcal M_{erg}(f)$, such that,
$$\int \phi(x)d\mu(x)=0.$$
Then for $\mu$-almost all $x \in M$, there exist $n_k\to\infty$ such that,
$$f^{n_k}(x)\rightarrow x \: \mbox{ and } \: S_{n_k}f (x) \rightarrow 0.$$ 
\end{lemma}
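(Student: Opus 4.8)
The plan is to deduce this directly from Atkinson's Lemma, whose classical statement concerns measure-preserving transformations and integrable cocycles: if $(X,\mathcal{B},\mu,T)$ is an ergodic measure-preserving system and $g\in L^1(\mu)$ with $\int g\,d\mu=0$, then for $\mu$-a.e. $x$ and every $\ep>0$ there exist infinitely many $n$ with $|S_ng(x)|<\ep$, where $S_ng(x)=\sum_{i=0}^{n-1}g(T^ix)$. Applying this with $T=f$, $g=\phi$ (which is continuous on a compact manifold, hence bounded, hence in $L^1(\mu)$) already gives the Birkhoff-sum recurrence $S_{n_k}f(x)\to 0$ along a subsequence $n_k\to\infty$, for $\mu$-a.e. $x$. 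So the only additional ingredient is to arrange simultaneously that $f^{n_k}(x)\to x$.

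To get both conclusions at once, the standard trick is to apply Atkinson's Lemma not to $\phi$ alone but to a vector-valued (or finite collection of) cocycle(s) that also sees the position. First I would fix a countable basis $\{U_j\}$ of open sets for the topology of $M$; since $M$ is a compact manifold it is second countable, and it suffices to show that for $\mu$-a.e. $x$ and every $j$ with $x\in U_j$ there are infinitely many $n$ with $f^n(x)\in U_j$ and $|S_nf(x)|$ small. For a fixed $U_j$ with $\mu(U_j)>0$, consider the induced (first-return) system on $U_j$: by Kac's lemma the return time is integrable, the induced map $f_{U_j}$ is ergodic for the normalized restriction $\mu_j$ of $\mu$, and the induced cocycle $\phi_{U_j}(x)=S_{\tau(x)}f(x)$ (sum of $\phi$ over one excursion) is in $L^1(\mu_j)$ with $\int\phi_{U_j}\,d\mu_j=0$ by the Abramov/Kac identity. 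Now apply Atkinson's Lemma to $(U_j,\mu_j,f_{U_j},\phi_{U_j})$: for $\mu_j$-a.e. $x\in U_j$ there are infinitely many return times $m_k\to\infty$ (return times to $U_j$, so automatically $f^{m_k}(x)\in U_j$) with $\big|\sum_{i<k}\phi_{U_j}(f_{U_j}^i x)\big|=|S_{m_k}f(x)|\to 0$. Taking a countable union over the (countably many) basic sets $U_j$ with positive measure, we get a full-measure set of $x$ for which, for every basic neighborhood $U_j\ni x$, there are infinitely many $n$ with $f^n(x)\in U_j$ and $S_nf(x)\to 0$; a diagonal argument over a neighborhood basis at $x$ then produces a single sequence $n_k\to\infty$ with $f^{n_k}(x)\to x$ and $S_{n_k}f(x)\to 0$.

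The main obstacle — really the only nontrivial point — is checking the hypotheses needed to invoke Atkinson's Lemma on the induced system, namely the integrability of the induced cocycle $\phi_{U_j}$ and the vanishing of its integral. This is exactly the content of the Kac/Abramov formalism: $\int_{U_j}\tau\,d\mu=1$ (so $\mu_j=\mu|_{U_j}/\mu(U_j)$ with $\mu(U_j)\tau$ playing its usual role), $\int_{U_j}|\phi_{U_j}|\,d\mu_j\le \|\phi\|_\infty\int_{U_j}\tau\,d\mu_j<\infty$, and $\int_{U_j}\phi_{U_j}\,d\mu=\int_M\phi\,d\mu=0$ by unfolding the excursion sums. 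Ergodicity of $f_{U_j}$ with respect to $\mu_j$ is standard for ergodic $\mu$. Once these are in place the cited Atkinson result applies verbatim and the lemma follows; I would present this succinctly, citing \cite{atkinson} for the base statement and only spelling out the inducing step.
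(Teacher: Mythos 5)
Your proposal is correct, but it takes a longer route than the paper, which offers no written proof at all: it simply cites Atkinson's lemma as giving the statement "directly". The reason this is legitimate is that Atkinson's theorem in \cite{atkinson} is already stronger than the sums-only version you start from: for an ergodic measure-preserving system and a zero-mean $L^1$ function $g$, it asserts that for \emph{every} set $A$ of positive measure and every $\ep>0$, almost every $x\in A$ returns to $A$ infinitely often at times $n$ with $|S_n g(x)|<\ep$. Granting that form, one only needs your final step (a countable basis of neighborhoods of positive measure around points of $\supp\mu$, countably many $\ep$'s, and a diagonal extraction) to get $f^{n_k}(x)\to x$ and $S_{n_k}f(x)\to 0$ simultaneously. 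Your inducing argument — first-return map on a basic set $U_j$, Kac integrability of the return time, the Abramov/unfolding identity $\int_{U_j}\phi_{U_j}\,d\mu=\int_M\phi\,d\mu=0$, ergodicity of the induced map, then Atkinson on the induced system — in effect re-derives that stronger recurrence statement from the weaker one, so it is sound but carries extra machinery the citation makes unnecessary. Two small points you should make explicit if you keep your route: (i) the diagonalization must be restricted to basic neighborhoods of positive measure, which is automatic only after intersecting with $\supp\mu$ (a full-measure set), as you implicitly do; and (ii) since $f$ is an endomorphism, both Atkinson's lemma and the standard inducing facts (invariance and ergodicity of the induced measure, Kac, Abramov) should be justified in the non-invertible setting, e.g.\ by passing to the natural extension and projecting, since forward Birkhoff sums and return times only depend on the projected point — a caveat the paper itself glosses over when applying Atkinson to endomorphisms.
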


We begin with the following simple result

\begin{lemma}
There exists a dense subset of End($M$) such that, for all $x\in M$ and all $f$ in this subset, the set $ \{y\in M: f(y)=x \}$ is finite.
\end{lemma}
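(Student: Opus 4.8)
The plan is to prove the equivalent statement: for every $f\in\mathrm{End}(M)$ and $\ep>0$ there is a continuous surjection $g$ with $d(f,g)<\ep$ all of whose fibres are finite; density follows at once. I would build $g$ in two stages — first arrange finite fibres using a fine triangulation, then repair surjectivity by a local surgery.

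Fix a smooth triangulation $\mathcal{T}$ of $M$ (these exist since $M$ is smooth) and, passing to a barycentric subdivision, assume every closed simplex is so small that $f$ maps it into a ball of radius $<\ep/10$. Fix also a smooth embedding $\iota\colon M\hookrightarrow\R^N$ with a tubular neighbourhood $U\supset\iota(M)$ and a smooth retraction $\pi\colon U\to M$. For each vertex $v$ of $\mathcal{T}$ choose $g(v)\in M$ with $d(f(v),g(v))$ small (say $<\ep/10$) but in general position, meaning that for each top simplex $\sigma=[v_0,\dots,v_n]$ the points $\iota(g(v_0)),\dots,\iota(g(v_n))$ span a non-degenerate affine $n$-simplex $\Sigma_\sigma\subset\R^N$ transverse to the fibres of $\pi$; since there are finitely many simplices and each condition is open and dense, such a choice exists (one may even keep the relevant Jacobians uniformly bounded away from $0$ by taking $\Sigma_\sigma$ close in shape to a regular simplex of a fixed small size). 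Now define $g\colon M\to M$ on the simplex $\sigma$, in barycentric coordinates, by
$$g\Big(\sum_i t_i v_i\Big):=\pi\Big(\sum_i t_i\,\iota(g(v_i))\Big);$$
this is well defined (the value on a common face of two simplices uses only the vertices of that face), continuous and piecewise smooth, and satisfies $d(f,g)<\ep$ by a routine estimate (both $f(\sigma)$ and $g(\sigma)$ have small diameter). The point is that $g$ has finite fibres: on $\sigma$ the map $g|_\sigma$ is $\pi$ composed with the affine parametrisation $\sigma\to\Sigma_\sigma$, and as $\Sigma_\sigma$ is a small flat simplex transverse to the fibres of $\pi$, the restriction $\pi|_{\Sigma_\sigma}$ is a diffeomorphism onto its image; hence $g|_\sigma$ is injective, so $g^{-1}(y)$ meets each of the finitely many simplices in at most one point and is finite.

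It remains to make $g$ surjective. Fix a finite $\ep/4$-net $c_1,\dots,c_m$ of $M$, choose $x_j\in f^{-1}(c_j)$, and take pairwise disjoint small closed balls $\bar B_j\ni x_j$ on which $f$ — hence, the first-stage perturbation having been kept small enough, also $g$ — stays within $\ep/4$ of $c_j$. On each $\bar B_j$ replace $g$ by a finite-to-one map onto $\overline{B(c_j,\ep/2)}$ that agrees with $g$ on $\partial B_j$ (a fold of a disk over a slightly larger concentric disk, matched to the boundary values through a thin collar; a preliminary $\ep$-small adjustment of $g$ near $\partial B_j$ arranges that $c_j\notin g(\partial B_j)$, so that no collapsing occurs). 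The resulting $g'$ has $d(f,g')<\ep$, still has all fibres finite (finitely many modifications, each preserving finiteness), and is surjective because $\bigcup_j g'(\bar B_j)=\bigcup_j\overline{B(c_j,\ep/2)}=M$. So $g'$ is the desired approximation.

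The step I expect to be the genuine obstacle, meriting a careful argument, is the injectivity of $g|_\sigma$: making precise the claim that a small, suitably non-degenerate flat $n$-simplex spanning $n+1$ nearby points of $\iota(M)$ is carried diffeomorphically by the tubular retraction $\pi$. This is where the general-position choice of the vertex images is used, and where "small" must be controlled against the geometry of the tubular neighbourhood; alternatively one could dispense with $\iota$ and $\pi$ and define $g|_\sigma$ intrinsically from the Riemannian centre of mass of the points $g(v_i)$, invoking the standard fact that the centre-of-mass simplex on $n+1$ affinely independent points of a sufficiently small geodesic ball is an embedding. The remaining ingredients — existence and fineness of $\mathcal{T}$, the $\mathcal{C}^0$ estimates, and the surgery restoring surjectivity — are routine.
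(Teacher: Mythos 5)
Your route is genuinely different from the paper's. The paper also starts from a fine triangulation, but it never has to repair surjectivity: it takes a subtriangulation $\Im_2$ of $\Im_1$ so that each big simplex $\Delta_i$ contains a small simplex $\tilde\Delta_{j_i}$ in its interior, and defines the perturbation to be linear (in charts) on each simplex of $\Im_2$ with $f(\tilde\Delta_{j_i})$ a simplex \emph{containing} $f_0(\Delta_i)$ and of diameter less than $\varepsilon$. Surjectivity is then automatic from $M=\bigcup_i f_0(\Delta_i)\subset\bigcup_i f(\tilde\Delta_{j_i})\subset f(M)$, and linearity gives at most one preimage per simplex, hence finite fibres; the price is a controlled expansion (the constant $K$ comparing circumscribed and inscribed spheres of the regular simplex). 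Your first stage --- simplicial approximation with vertex images in general position, affine extension pushed back to $M$ by a tubular retraction --- is sound, and the injectivity of $g|_\sigma$ that you single out as the delicate point is indeed provable by the uniform transversality argument you sketch.

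The step I would not accept as routine, however, is the surjectivity surgery, and you have misplaced the difficulty there. You need a finite-to-one map of $\bar B_j$ onto $\overline{B(c_j,\varepsilon/2)}$ agreeing with $g$ on $\partial B_j$, and the natural constructions fail precisely on finiteness of fibres. A ``thin collar'' interpolating between $g|_{\partial B_j}$ and the fold is a homotopy, and a homotopy with finite-to-one ends need not be finite-to-one: the straight-line (or geodesic) interpolation from $g|_{\partial B_j}$ to a constant has fibre over $y$ equal to the $g$-preimage of a ray, which is uncountable as soon as $g(\partial B_j)$ contains an arc of that ray. Likewise, realizing the fold by post-composing $g$ with a radial stretch of $B(c_j,\varepsilon/4)$ onto $\overline{B(c_j,\varepsilon/2)}$ must send some sphere $\{d(\cdot,c_j)=r_0\}$ to the centre $c_j$ in order to be onto, so the fibre over $c_j$ becomes $g^{-1}$ of a codimension-one sphere, generically infinite; your proviso that $c_j\notin g(\partial B_j)$ does not address this. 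The gap is fixable --- for instance, place each $\bar B_j$ inside a single top simplex, where $g$ is an embedding, and build the fold on the embedded disk $g(\bar B_j)$ using a boundary collar of $\overline{B(c_j,\varepsilon/2)}\setminus \mathrm{int}\,g(\bar B_j)$ so that every fibre has at most two points --- but that requires a genuine argument (tameness of the embedded disk, an annulus/collar construction), not a one-line assertion. The paper's expansion trick is designed exactly to avoid confronting this.
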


\begin{proof}

Let $f_0$ be an endomorphism, and let $\varepsilon>0$. We will find some $f$ with the stated property $\varepsilon$ close to $f_0$. First, let $\delta>0$ be such that, for all $x_1,x_2\in M$, if $d(x_1,x_2)<\delta$, then $d(f(x_1),f(x_2))<\varepsilon/2K$, where $K$ is the ratio of the radii of the circunscribed and inscribed spheres in the $n$ dimensional regular simplex.

Since every $n$-dimensional differential manifold admits a triangulation and $M$ is compact, we can assume that $M$ has a triangulation $\Im_1$ with finitely many triangles, such that each simplex has diameter less then $\delta$ and let $\Im_2$ be a subtriangulation of $\Im_1$ such that, for each $\Delta_i\in\Im_1$ there exists  some $\tilde\Delta_{j_i}\in\Im_2$ which is contained in the interior of $\Delta_i$.
 
Now we define $f:M\rightarrow M$, in a way that $f$ is a linear bijection in each triangle of $\Im_2$ and such that, in local coordinates, $f(\tilde\Delta_{j_i})$ is a simplex that contains $f_0(\Delta_i)$ and is contained in a sphere or radius $\varepsilon/2$. It should be immediate that $f$ is  a continuous surjection, since $M=\bigcup_{i\in I}f_0(\Delta_i)\subset \bigcup_{j\in J}f(\tilde\Delta_{j_i})\subset f(M)$. Moreover if $x\in \tilde\Delta_j$ for some $\Delta \in \Im$ then $d(f_0(x),f(x))<\varepsilon/2$, this implies $d(f_0,f)<\varepsilon.$

As $f$ is linear in each simplex of $\Im_2$, the set $\{y\in \tilde\Delta_j \cap M: f(y)=x \}$ is either empty or unitary, and therefore $\{y\in M: f_2(y)=x \}$ has cardinality smaller than or equal to the number of simplexes in $\Im_2$

\end{proof}

The structure of proof of Theorem \ref{maintheorem} is the following. Let $\phi_0:M\to\R$ be fixed. We start with an endomorphism $f$ which we assume that, for every $x\in M$, the pre-image of $x$ is finite and we construct successive small perturbations to produce an endomorphism $\widetilde f$ which is $\varepsilon$ close to $f$ and such that $\widetilde f$ has a $\phi_0$ maximizing measure supported on a periodic orbit.

Let $\mu_{\max}\in  \mathcal M_{erg}(f)$ be a $\phi_0$ maximizing measure and let $\phi= \phi_0-\int{\phi d\mu_{\max}}$, so that $\int{\phi d\mu_{\max}}=0$, and we remark that, for any endomorphism $g$, $\mu$ is a $\phi_0$ maximizing measure if and only if it is a $\phi$ maximizing measure.

\begin{lemma}\label{at}
For all $x\in \supp(\mu_{\max})$ and $\varepsilon>0$ there exist $y\in B_{\varepsilon}(x)$ and $n>0$ with $f^n(y)\in B_{\varepsilon}(x)$ and $S_nf(y)\ge0$. 
\end{lemma}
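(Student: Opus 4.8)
The plan is to derive this as an immediate corollary of Atkinson's Lemma (Lemma \ref{atik}) applied to the measure $\mu_{\max}$ and the normalized potential $\phi$. Recall that by construction $\int \phi \, d\mu_{\max} = 0$ and $\mu_{\max}$ is ergodic, so Lemma \ref{atik} applies verbatim: for $\mu_{\max}$-almost every $z \in M$ there exist $n_k \to \infty$ with $f^{n_k}(z) \to z$ and $S_{n_k}f(z) \to 0$.

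First I would fix $x \in \supp(\mu_{\max})$ and $\varepsilon > 0$. Since $x$ is in the support of $\mu_{\max}$, the open ball $B_{\varepsilon/2}(x)$ has positive measure, so it must contain a point $z$ for which the Atkinson conclusion holds (the full-measure set from Lemma \ref{atik} meets every positive-measure set). For this $z$, pick $k$ large enough that simultaneously $d(f^{n_k}(z), z) < \varepsilon/2$ and $|S_{n_k}f(z)| < \delta$ for a $\delta$ to be chosen. Then $z \in B_{\varepsilon/2}(x) \subset B_\varepsilon(x)$ and $d(f^{n_k}(z), x) \le d(f^{n_k}(z), z) + d(z, x) < \varepsilon/2 + \varepsilon/2 = \varepsilon$, so $f^{n_k}(z) \in B_\varepsilon(x)$, and $n_k > 0$.

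The only remaining gap is that Atkinson gives $S_{n_k}f(z) \to 0$, which yields $S_{n_k}f(z)$ close to zero but not necessarily nonnegative, whereas the statement asks for $S_nf(y) \ge 0$. I would handle this by noting that $\int \phi \, d\mu_{\max} = 0$ is the \emph{maximum} value of $\int \phi \, d\mu$ over all invariant measures; hence no point can have all its long Birkhoff averages bounded away from zero on the positive side being impossible — more carefully, if it were the case that for some $x$ and $\varepsilon$ every return $y \in B_\varepsilon(x)$ to $B_\varepsilon(x)$ had $S_nf(y) < 0$, one could still not directly contradict maximality without more work, so instead the clean route is: since $\mu_{\max}$ is ergodic with zero integral, $\mu_{\max}$-a.e.\ point has $\liminf_{n} S_nf(z)/n = 0$ and in fact (again by Atkinson, or by the ergodic theorem combined with recurrence) the sequence $S_{n_k}f(z)$ for the recurrence times $n_k$ can be taken to converge to $0$ from either side along a subsequence; picking the subsequence where it is $\ge -\delta$ and absorbing the $\delta$ is not enough for $\ge 0$, so one actually uses that for an ergodic zero-average measure the Birkhoff sums are recurrent in $\R$ (a standard consequence of Atkinson), meaning $0$ is a limit point of $\{S_nf(z)\}$ from above; thus we may choose $n = n_k$ with $S_{n_k}f(z) \ge 0$ while keeping $d(f^{n_k}(z),z)$ small. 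Setting $y = z$ finishes the argument.

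The main obstacle, then, is precisely this sign issue: upgrading "$S_nf(y)$ near $0$" to "$S_nf(y) \ge 0$". I expect the paper resolves it exactly through the recurrence of the Birkhoff sums $\mathbb{R}$-cocycle for an ergodic zero-mean potential (the content of Atkinson's theorem is in fact this recurrence statement, of which the version in Lemma \ref{atik} is a convenient reformulation), so that $0$ is approached from above infinitely often and one simply selects such an index $n$. Everything else — intersecting the full-measure set with $B_{\varepsilon/2}(x)$ and the two triangle-inequality estimates — is routine.
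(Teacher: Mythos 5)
Your reduction to Atkinson's Lemma and the triangle-inequality bookkeeping match the paper, and you correctly isolate the real difficulty: Lemma \ref{atik} only gives $S_{n_k}f(z)\to 0$, which could happen entirely from below. But your resolution of that difficulty is a gap. You assert that ``$0$ is a limit point of $\{S_nf(z)\}$ from above'' as ``a standard consequence of Atkinson.'' That statement is not what Lemma \ref{atik} (or cocycle recurrence) says, it is not immediate, and even if you proved it you would still need the index $n$ realizing $S_nf(z)\ge 0$ to \emph{simultaneously} satisfy $f^n(z)\in B_\varepsilon(x)$ --- a conjunction your sketch never secures. As written, the crucial step is an unproved claim.

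The paper closes this gap with an elementary telescoping trick that you should note, because it also explains why the lemma is phrased as ``there exists $y\in B_\varepsilon(x)$'' rather than asserting the property for the Atkinson point itself. Take the Atkinson point $x_1$ and a return time $n_{k_1}$ with $f^{n_{k_1}}(x_1)\in B_\varepsilon(x)$, and set $a_1=S_{n_{k_1}}f(x_1)$. If $a_1\ge 0$ you are done. If $a_1<0$, then since $S_{n_k}f(x_1)\to 0$ there is a later return time $n_{k_2}$ with $f^{n_{k_2}}(x_1)\in B_\varepsilon(x)$ and $S_{n_{k_2}}f(x_1)>a_1$; the cocycle identity
$S_{n_{k_2}}f(x_1)=S_{n_{k_1}}f(x_1)+S_{n_{k_2}-n_{k_1}}f\bigl(f^{n_{k_1}}(x_1)\bigr)$
then forces $S_{n_{k_2}-n_{k_1}}f\bigl(f^{n_{k_1}}(x_1)\bigr)>0$. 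Taking $y=f^{n_{k_1}}(x_1)$ (which lies in $B_\varepsilon(x)$) and $n=n_{k_2}-n_{k_1}$ gives both the spatial condition and the sign condition at once, with no strengthening of Atkinson needed. You should replace your appeal to ``approach from above'' by this difference argument.
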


\begin{proof}

Let $x\in\supp(\mu_{\max})$. By \ref{atik}, since $\mu_{max}(B_{\varepsilon}(x))>0$, there exist $x_1\in \supp(\mu_{\max})$ and $n_k$, such that $f^{n_k}(x_1)\rightarrow x_1$ as $k\rightarrow \infty$, and such that $S_nf(x_1)\to 0$. Let $k_1>0$ be such that $f^{n_{k_1}}(x_1)\in B_{\varepsilon}(x)$ and let $a_1=S_{n_{k_1}}f(x_1)$. If $a_1\ge 0$ we set $y=x_1, n=n_{k_1}$ and we are done. If $a_1<0$, let $n_{k_2}>n_{k_1}$ be such that  $f^{n_{k_2}}(x_1)\in B_{\varepsilon}(x)$ and such that $S_{n_{k_2}}f(x_1)>a_1$. Then, as
$S_{n_{k_2}}f(x_1)= S_{n_{k_1}}f(x_1)+ S_{(n_{k_2}-n_{k_1})}f( f^{n_{k_1}}(x_1))$, we set $y = f^{n_{k_1}}(x_1)$ and $n = n_{k_2}-n_{k_1}$ and we are done.
\end{proof}

The next proposition is a consequence of the $\mathcal M_f$ compactness.

\begin{proposition}\label{jproposition}
For every constant $a>0$, there exist a positive integer $m_0=m_0(a)$ such that, for all $m\geq m_0$ and $x\in M$
$$\frac{1}{m}S_mf(x)\leq \frac{a}{2}. $$
\end{proposition}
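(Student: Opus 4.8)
The plan is to argue by contradiction using the compactness of $\mathcal{M}_{inv}(f)$ together with the fact that $\int \phi\, d\mu_{\max} = 0$ and $\mu_{\max}$ is a $\phi$-maximizing measure, so that $\int \phi\, d\mu \le 0$ for every $\mu \in \mathcal{M}_{inv}(f)$. Suppose the conclusion fails. Then for every positive integer $m$ there exist $m_k \ge k$ and points $x_k \in M$ with $\frac{1}{m_k} S_{m_k}f(x_k) > \frac{a}{2}$. The natural object to form is the sequence of empirical (Birkhoff) measures $\mu_k = \frac{1}{m_k}\sum_{i=0}^{m_k-1}\delta_{f^i(x_k)}$, which are probability measures on $M$ satisfying $\int \phi\, d\mu_k = \frac{1}{m_k}S_{m_k}f(x_k) > \frac{a}{2}$.

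The key steps, in order, are as follows. First, by weak-* compactness of the space of Borel probability measures on the compact manifold $M$, pass to a subsequence so that $\mu_k \to \mu$ weak-*. Second, show $\mu \in \mathcal{M}_{inv}(f)$: this is the standard argument that any weak-* limit of Birkhoff averages along orbits is $f$-invariant, using that for continuous $g$ one has $\left| \int g\circ f\, d\mu_k - \int g\, d\mu_k \right| = \frac{1}{m_k}\left| g(f^{m_k}(x_k)) - g(x_k) \right| \le \frac{2\|g\|_\infty}{m_k} \to 0$, and continuity of $g \mapsto g \circ f$ on $C(M)$. Third, since $\phi$ is continuous, weak-* convergence gives $\int \phi\, d\mu = \lim_k \int \phi\, d\mu_k \ge \frac{a}{2} > 0$, contradicting $\int \phi\, d\mu \le 0$ for all $\mu \in \mathcal{M}_{inv}(f)$. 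Hence the desired $m_0(a)$ exists.

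The only genuine subtlety, which I expect to be the main point requiring care rather than a true obstacle, is the verification that the limit measure $\mu$ is $f$-invariant; this relies essentially on $f$ being continuous (so that $g \circ f \in C(M)$ whenever $g \in C(M)$) and on the uniform bound $\|g\|_\infty < \infty$ coming from compactness of $M$. Everything else is a routine application of the Banach--Alaoglu / Riesz representation machinery already invoked in the Preliminaries when asserting compactness of $\mathcal{M}_{inv}(f)$. One should also note that the statement is not quite stated for all $x$ simultaneously with a single $m$ unless one takes the contradiction hypothesis in the form above (a bad $m_k$ with a bad $x_k$ for each $k$), which is exactly what the negation of "there exists $m_0$ such that for all $m \ge m_0$ and all $x$..." provides.
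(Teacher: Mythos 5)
Your argument is correct. The paper itself gives no direct argument: it simply invokes Proposition 2.1 of Jenkinson's survey, which states the identity $\limsup_{n\to\infty}\max_{x\in M}\frac{1}{n}S_nf(x)=\sup_{\mu\in\mathcal{M}_{inv}(f)}\int\phi\,d\mu$, together with the normalization $\int\phi\,d\mu_{\max}=0$. What you have written is precisely the standard proof of the inequality direction of that identity that is actually needed (a putative sequence of large Birkhoff averages yields, via empirical measures and weak-* compactness, an invariant measure with $\int\phi\,d\mu\geq a/2>0$), so your proof is a self-contained version of the paper's citation rather than a different route; all the steps, including the invariance of the weak-* limit and the correct quantifier structure in the contradiction hypothesis, check out.
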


\begin{proof}
This follows from $$\limsup_{n\rightarrow \infty} \max_{x\in M}\frac{1}{n}S_nf(x)=\sup_{\mu \in \mathcal M_{inv}(f)} \int{\phi d\mu}=0, $$ proposition 2.1 of  \cite{jenkinsondcds}
\end{proof}

\section{Contruction of the perturbed endomorphism}

Fix $x\in \supp(\mu)$ and let $\varepsilon>0$.  There are two possibilities,

\begin{itemize}
\item[I] {For all $y\in B_{\varepsilon}(x)$ and all $n>0$, if $f^n(y)\in B_{\varepsilon}(x)$ then $S_nf(y)\le 0$}
\item[II]{There exists $x_0\in B_{\varepsilon}(x)$ and $n_0>0$ such that $\widetilde f^{n_0}(x_0)\in B_{\varepsilon}(x)$ and $S_{n_0}f(x_0)>0$}.
\end{itemize}

\subsection{Case I}
Let us show first how to construct $\widetilde f$ in the case I :
Denote, for simplicity, $B=B_{\varepsilon}(x)$.
We assume that for all $y\in B$ and all $n>0$, if $f^n(y)\in B$ then $S_nf(y)\le 0$. From Lemma \ref{at} there exists $x_0\in B$ and $n_0>0$ such that $S_{n_0}f(x_0)\ge 0$, and so $S_{n_0}f(x_0)= 0$. Let $0<n_1\le n_0$ be the first return of $x_0$ to $B$. Note that, as 
$$0= S_{n_0}f(x_0)= S_{n_1}f(x_0)+S_{n_0-n_1}f(f^{n_1}(x_0))\le S_{n_1}f(x_0),$$ where the inequality comes from assuming that we are in case I, then $S_{n_1}f(x_0)\ge 0$ and, again from the assumption, $S_{n_1}f(x_0)=0$.

Let $T:M\to M$ be a homeomorphism such that $T(f^{n_1}(x_0))=x_0$, and such that $T$ is the identity outside of $B$, let $\widetilde f= T\circ f$. Note that $x_0$ is a $n_1$ periodic point for $\widetilde f$. Let $\mu_1$ be the measure uniformly distributed on the points of the $\widetilde f$ orbit of $x_0$.

\begin{lemma}\label{casoI}
$\mu_1$ is a $\phi$ maximizing measure for $\widetilde f$
\end{lemma}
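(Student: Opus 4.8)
The plan is to show two things: first, that $P_\phi(\mu_1) = \int \phi\, d\mu_1 = 0$, which places $\mu_1$ among the maximizers of $\phi_0$ (equivalently of $\phi$) provided no $\widetilde f$-invariant measure does strictly better; and second, that indeed $\sup_{\nu \in \mathcal{M}_{inv}(\widetilde f)} \int \phi\, d\nu \le 0$, so that $\mu_1$ actually attains the maximum. The first point is almost immediate: the perturbation $T$ is supported inside $B$ and sends $f^{n_1}(x_0)$ to $x_0$, so the $\widetilde f$-orbit of $x_0$ is exactly $x_0, f(x_0), \dots, f^{n_1-1}(x_0)$, a periodic orbit of period $n_1$. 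Since $T = \mathrm{id}$ outside $B$ and the points $f(x_0), \dots, f^{n_1-1}(x_0)$ all lie outside $B$ (by the choice of $n_1$ as the \emph{first} return time), we have $\widetilde f^i(x_0) = f^i(x_0)$ for $0 \le i \le n_1 - 1$. Hence $\int \phi\, d\mu_1 = \frac{1}{n_1}\sum_{i=0}^{n_1-1}\phi(\widetilde f^i(x_0)) = \frac{1}{n_1}S_{n_1}f(x_0) = 0$ by the computation preceding the lemma.

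The real content is the second point, and here I would argue by contradiction. Suppose some ergodic $\nu \in \mathcal{M}_{erg}(\widetilde f)$ satisfies $\int \phi\, d\nu > 0$; by the ergodic theorem there is a point $z$ and arbitrarily large $N$ with $\frac{1}{N}S_N\widetilde f(z) > 0$. The key observation is that a Birkhoff sum of $\phi$ along a $\widetilde f$-orbit differs from the corresponding sum along an $f$-orbit only through the action of $T$, and $T$ affects nothing outside $B$; concretely, one decomposes the $\widetilde f$-orbit segment of $z$ into maximal excursions between consecutive visits to $B$. On each such excursion, once we are back inside $B$ at a point $w$ and return to $B$ after $k$ steps, the relevant $\widetilde f$-Birkhoff sum equals $S_k f(w')$ for the appropriate $f$-pre-image correction coming from $T$, and Case I forces each such return sum to be $\le 0$. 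The portion of the orbit before the first entry to $B$ and after the last entry contributes a bounded error (since $\phi$ is bounded and $T$ is applied at most... actually, since $T$ only acts on $B$, the boundary pieces are genuine $f$-Birkhoff sums of bounded length times $\|\phi\|_\infty$, hence $O(1)$). Dividing by $N$ and letting $N \to \infty$ gives $\int \phi\, d\nu \le 0$, a contradiction.

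The main obstacle I anticipate is making the excursion bookkeeping precise: after applying $T$, a point that was at $f^{n_1}(x_0)$ is relocated to $x_0 \in B$, so one must carefully track that $\widetilde f$-returns to $B$ correspond to genuine $f$-itineraries that both start and end in $B$ (so Case I applies), and verify that the $\phi$-sum along a $\widetilde f$-excursion equals the $\phi$-sum along the matching $f$-excursion — this uses that $\phi$ is evaluated at points outside $B$ where $T$ is the identity, and that $T$ only alters \emph{where the orbit goes next}, not the value of $\phi$ at points already visited. One must also handle the case where $\nu$ gives zero mass to $B$: then $\nu$ is $f$-invariant with support disjoint from $B$, and $\int \phi\, d\nu \le \sup_{\mu \in \mathcal{M}_{inv}(f)} \int \phi\, d\mu = 0$ directly. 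Once these cases are organized, the contradiction closes and $\mu_1$ is a $\phi$-maximizing (hence $\phi_0$-maximizing) measure supported on the periodic orbit of $x_0$.
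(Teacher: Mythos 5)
Your proposal is correct and follows essentially the same route as the paper: establish $\int\phi\,d\mu_1=0$, then decompose an arbitrary $\widetilde f$-orbit into excursions between consecutive visits to $B$, each of which is a genuine $f$-Birkhoff sum starting and ending in $B$ and hence nonpositive by the Case I hypothesis, so every invariant measure integrates $\phi$ to at most $0$. The only cosmetic difference is that you phrase it as a contradiction via an ergodic measure and a generic point, while the paper directly bounds $\limsup_n \frac{1}{n}S_n\widetilde f(z)$ for every $z\in M$; also note that no ``pre-image correction'' is actually needed in the excursion bookkeeping, since $T$ acts after $f$ and only relocates the endpoint of each excursion, which still lands in $B$.
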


\begin{proof}
Clearly $\int \phi d\mu_1= \frac{1}{n_1}S_{n_1}\widetilde f(x_0)= \frac{1}{n_1}S_{n_1} f(x_0)=0$. Furthermore, if $z\in M$ is such that there exists $n_z$ such that, if $n>n_z$ then $\widetilde f^{n}(z)\notin B$, then $\limsup_{n\to\infty}\frac{1}{n}S_n\widetilde f(z)= \limsup_{n\to\infty}\frac{1}{n}S_n\widetilde f(\widetilde f^{n_z}(z))= \limsup_{n\to\infty}\frac{1}{n}S_n f(\widetilde f^{n_z}(z))\le 0$ where the last inequality comes from the fact that $P_\phi(\mu)\le 0$ for all $\mu\in \mathcal{M}_{inv}(f)$.

On the other hand, if the return times of $z$ to $B$  are $0\le t_0<t_1<t_2.....$ with $t_k\to\infty$, then 
$$\frac{1}{t_k}S_{t_k} \widetilde f(z)= \frac{1}{k}\left(S_{t_0}\widetilde f(z) +\sum_{j=1}^{k} S_{t_j-t_{j-1}}\widetilde f( \widetilde f^{t_{j-1}}(z))\right)\le \frac{1}{k}S_{t_0}\widetilde f(z)\to 0$$ 
so that $\lim_{n\to\infty} \frac{1}{n} S_n\widetilde f(z)\le \int \phi d\mu_1$ for all $z\in M$ and we have the result
\end{proof}

\subsection{Case II }

 Assume now we are in case II, and let $a_0= \frac{1}{n_0}S_{n_0}f(x_0)>0$.

Denote by $B_{\varepsilon}[z]$ the closed ball with center $z$ and radius $\varepsilon$.

Let $m_0=m_0(a_0)>n_0>0$ be the integer from Proposition (\ref{jproposition}), and for each $k\in \{1,2,\ldots,m_0\}$ consider the compact sets 
$$K_k=B_{\varepsilon}[x_0]\cap f^{-k}(B_{\varepsilon}[x_0]).$$ 
For each $k$, let $c_k=\sup_{z\in K_k}\frac{1}{k}S_kf(z)$ and let $\overline{c}=\sup\{c_1, ... ,c_{m_0}\}$.  Note that, by the choice of $x_0,\, c_{n_0}\ge a_0.$ Furthermore, by Proposition (\ref{jproposition}), if $n>m_0$ then for all $z \in M,\,\frac{1}{n}S_nf(z)\le \frac{a_0}{2}.$ This, and the choice of $\overline c$ implies that, for each $z\in B_{\varepsilon}[x_0]$ and $n>0$ such that $f^n(z)\in B_{\varepsilon}[x_0]$, we have $\frac{1}{n}S_nf(z)\le \overline c$.

We consider 2 distinct possibilities:
\begin{itemize}
\item[(a)] {There  exists $q\in B_{\varepsilon}[x_0],\, n_q>0$ such that $\frac{1}{n_q}S_{n_q}f(q)=\overline c$ and $f^{n_q}(q)$ lies in the {\it open} ball $B_{\varepsilon}(x_0)$}
\item[(b)] {For all $z\in B_{\varepsilon}[x_0], n>0$ if $\frac{1}{n}S_nf(z)=\overline c$ and $f^n(z)\in B_{\varepsilon}[x_0]$, then $f^n(z)\in \partial B_{\varepsilon}[x_0]$}
\end{itemize}

\subsubsection{Case $(a)$}
If $(a)$ happens then we can define $\widetilde{f}=T\circ f$, where $T$ is the identity outside of $B_{\varepsilon}[x_0]$ and  $T$ is an endomorphism of $B_{\varepsilon}[x_0]$ satisfying $T(f^{n_q}(q))=q$.

The next lemma show us that the $\widetilde{f}$ invariant measure supported on the periodic orbit of $q$ is a $\phi$-maximizing measure.

\begin{lemma}\label{somamenor}
For any $z\in M, \liminf_{n\to\infty}\frac{1}{n}S_n\widetilde{f}(z)\le \frac{1}{n_q}S_{n_q}\widetilde{f}(q)=\overline c$
\end{lemma}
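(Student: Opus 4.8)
The plan is to split an arbitrary orbit of $\widetilde f$ into two kinds of segments --- those that stay inside $B_\varepsilon[x_0]$ between consecutive visits and those that leave for an arbitrarily long stretch --- and bound the Birkhoff average of each kind by $\overline c$ and by $0$ respectively, then combine. First I would observe that $\widetilde f$ and $f$ agree outside $B_\varepsilon[x_0]$, so for any point $w$ whose forward $\widetilde f$-orbit eventually avoids $B_\varepsilon[x_0]$ forever, the tail orbit is literally an $f$-orbit, hence $\limsup_n \frac1n S_n\widetilde f(w)\le 0$ by Proposition \ref{jproposition} (equivalently because $P_\phi\le 0$ on $\mathcal M_{inv}(f)$), exactly as in the proof of Lemma \ref{casoI}. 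So the only issue is points that return to $B_\varepsilon[x_0]$ infinitely often.

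For such a point $z$, list its successive $\widetilde f$-return times to $B_\varepsilon[x_0]$ as $0\le t_0<t_1<t_2<\cdots$. The key point is that, for each consecutive pair $t_{j-1}<t_j$, the point $\widetilde f^{\,t_{j-1}}(z)$ lies in $B_\varepsilon[x_0]$ and its first $t_j-t_{j-1}$ iterates under $\widetilde f$ stay outside the \emph{open} ball until landing back in $B_\varepsilon[x_0]$; during that excursion $\widetilde f=f$ except possibly at the initial application, but since $T$ maps $B_\varepsilon[x_0]$ into itself the trajectory of $f$ started at $\widetilde f^{\,t_{j-1}}(z)$ also first returns to $B_\varepsilon[x_0]$ at time $t_j-t_{j-1}$ (or the discrepancy introduced by $T$ at the endpoints is absorbed because $T$ fixes the boundary behaviour up to staying inside). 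Hence $\widetilde f^{\,t_{j-1}}(z)\in K_{t_j-t_{j-1}}$ when $t_j-t_{j-1}\le m_0$, so $S_{t_j-t_{j-1}}\widetilde f(\widetilde f^{\,t_{j-1}}(z))\le (t_j-t_{j-1})\,\overline c$; and when $t_j-t_{j-1}>m_0$ the same bound holds because $\overline c\ge a_0$ while Proposition \ref{jproposition} gives average at most $a_0/2<\overline c$. Summing,
$$S_{t_k}\widetilde f(z)=S_{t_0}\widetilde f(z)+\sum_{j=1}^k S_{t_j-t_{j-1}}\widetilde f\bigl(\widetilde f^{\,t_{j-1}}(z)\bigr)\le S_{t_0}\widetilde f(z)+(t_k-t_0)\,\overline c,$$
and dividing by $t_k$ and letting $k\to\infty$ (using $t_k\to\infty$) yields $\liminf_n \frac1n S_n\widetilde f(z)\le \overline c$. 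To pass from the subsequence $t_k$ to a genuine liminf over all $n$ I would note that between return times the partial Birkhoff sums only accumulate bounded contributions per step (since $\phi$ is continuous on the compact $M$), so the value at a general time differs from the value at the nearest preceding return time by $o(n)$; alternatively one argues directly with $\liminf$ along $n=t_k$, which suffices since $\liminf_n$ over all $n$ is $\le \liminf$ along any subsequence. Finally $\frac1{n_q}S_{n_q}\widetilde f(q)=\frac1{n_q}S_{n_q}f(q)=\overline c$ because the orbit segment $q,f(q),\dots,f^{n_q-1}(q)$ avoids the open ball and $\widetilde f$ only alters $f$ via $T$ which acts after landing, so the Birkhoff sum along this segment is unchanged --- giving the displayed equality and showing the periodic measure on the orbit of $q$ has integral $\overline c$, which is maximal.

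The main obstacle I anticipate is the bookkeeping around the boundary $\partial B_\varepsilon[x_0]$ and the exact interplay between $\widetilde f$ and $f$ on excursion segments: one must be careful that inserting $T$ does not shorten or lengthen first-return times in a way that breaks the membership $\widetilde f^{\,t_{j-1}}(z)\in K_{t_j-t_{j-1}}$, and that the single modified step contributes a controlled amount to the Birkhoff sum. This is precisely why case (a) is singled out by the hypothesis that $f^{n_q}(q)$ lands in the \emph{open} ball --- it guarantees $q$ is genuinely periodic for $\widetilde f$ with the correct period and that the relevant suprema $c_k$ are attained in a way compatible with the perturbation; handling the complementary situation is deferred to case (b).
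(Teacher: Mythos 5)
Your proposal is correct and follows essentially the same route as the paper: split into orbits that eventually never return to $B_{\varepsilon}[x_0]$ (average $\le 0<\overline c$ since the tail is an $f$-orbit) and orbits returning infinitely often, whose Birkhoff sums are decomposed into excursion blocks each with $f$-average $\le\overline c$ by the definition of $\overline c$ together with Proposition \ref{jproposition}. The extra care you take about the interaction of $T$ with return times and about passing from the return-time subsequence to the full $\liminf$ is sound (and is glossed over in the paper's own proof), so no further changes are needed.
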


\begin{proof}

Let $z \in M$ and first assume that  $z$ is such that there exists some $\overline n$ such that  $\widetilde{f}^i(z)\notin B_{\varepsilon}[x_0]$ whenever $i\ge \overline{n}$, then 
$$\liminf_{n\to\infty}\frac{1}{n}S_n\widetilde{f}(z)= \liminf_{n\to\infty}\frac{1}{n}S_n\widetilde{f}( \widetilde f^{\overline{n}}(z))=\liminf_{n\to\infty}\frac{1}{n}S_nf( \widetilde f^{\overline{n}}(z))\le 0$$
where the second equality follows from the fact that $f(y)=\widetilde{f}(y)$ whenever $y\notin B_{\varepsilon}[x_0]$, and the inequality follows since the maximal $\phi$ average for $f$ is 0, and from $\sup_{z\in M}\limsup_{n\to\infty}\frac{1}{n}S_nf( \widetilde f^{\overline{n}}(z))\le \sup_{\mu\in\mathcal{M}_{inv}(f)} \int \phi d\mu$. As $\frac{1}{n_q}S_{n_q}f(q)>0$, we are done in this case.

Now assume that there exists an increasing sequence of times $N_k\to\infty, k\ge 1$ such that $\widetilde{f}^{i}(z)$ belongs to $B_{\varepsilon}[x_0]$ if and only if $i=N_k$ for some integer $k$. Then it holds that 
$$\frac{1}{N_{k+1}-N_k}\widetilde{f}(\widetilde{f}^{N_k}(z))=\frac{1}{N_{k+1}-N_k}f(\widetilde{f}^{N_k}(z))$$ and 

\begin{eqnarray}
\frac{1}{N_k} S_{N_k}\widetilde{f}(z)&=& \displaystyle \frac{1}{N_k} \left( \sum_{i=0}^{N_1-1}\phi \circ \widetilde{f}^i(z)+\ldots+\sum_{i=N_{k-1}}^{N_k-1}\phi \circ \widetilde{f}^i(z) \right) \nonumber \\
&=& \displaystyle \frac{1}{N_k}\left(\frac{N_1-N_0}{N_1-N_0}S_{N_1-N_0}\widetilde{f}(z)+\ldots+ \frac{N_k-N_{k-1}}{N_k-N_{k-1}}S_{N_k-N_{k-1}}\widetilde{f}\left( \widetilde{f}^{N_{k-1}}(z)\right) \right) \nonumber \\
&=& \displaystyle \frac{1}{N_k}\left(\sum_{i=1}^{k}(N_i-N_{i-1})\frac{1}{N_i-N_{i-1}}S_{N_i-N_{i-1}}f(\widetilde{f}^{N_{i-1}}(z))\right) \label{continha1} \\
&\leq & \frac{1}{N_k}\sum_{i=1}^{k}(N_i-N_{i-1})\overline c= \overline c. \nonumber
\end{eqnarray}
Where the inequality (\ref{continha1}) follows from $S_kf(z)=S_k\widetilde{f}(z)$, as $\widetilde{f}^i(z) \notin B_{\varepsilon}[x_0]$, $0\leq i \leq k-1$.

\end{proof}

The previous lemma shows that, if $z$ is a typical point of an $\widetilde{f}$ ergodic invariant measure $\mu$, then $\lim_{n}\frac{1}{n}S_n\widetilde{f}(z)=\int\phi d\mu\le \frac{1}{n_q}S_{n_q}\widetilde{f}(q)$ and we are done.

\subsubsection{Case $(b)$}

There exists some $z_1\in B_{\varepsilon}[x_0]$ and $n_{z_1}>0$ such that $f^{n_{z_1}}(z_1)\in\partial B_{\varepsilon}[x_0]$, and such that $\frac{1}{n_{z_1}}S_{n_{z_1}}f(z_1)=\overline c$. Let us call $q_0= f^{n_{z_1}}(z_1)$.
 Since each point in $M$ has finitely many preimages, the set  $P=(\bigcup_{i=1}^{m_0}f^{-i}(q_0))\cap B_{\varepsilon}[x_0]$ is finite, as is
$$\tilde P=\{ z\in P\mid \exists n_z>0 \hbox{ such that} f^{n_z}(z)=q_0 \hbox{ and } \frac{1}{n_z}S_{n_z}f(z)=\overline c\}.$$ 
Let $q\in \tilde P$ be a point which is closest to $q_0$ and let $n_q$ be such that $f^{n_q}(q)=q_0$ and $\frac{1}{n_q}S_{n_q}(q)=\overline c$.
Finally, let $E$ be some closed convex set contained in $B_{\varepsilon}[x_0]$ such that, if $d(z_1,z_2)\ge d(q,q_0)$ and $z_1,z_2\in E$, then $\{z_1,z_2\}=\{q,q_0\}$.

\begin{proposition}\label{cconexa}
There exist $\delta >0$ such that, if $z$ is not in the connected component of $$f^{-j}\left(B_{\delta}[q_0]\right)\bigcap \left(B_{\delta}[q_0]\cup E \right)$$ that contains $q$, then $$\frac{1}{j}S_jf(z)<\overline c$$ for all $j\in \N^*$.
\end{proposition}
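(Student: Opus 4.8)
The natural approach is a proof by contradiction, using the compactness of $M$ together with Proposition \ref{jproposition} to force the relevant return times to range over a finite set.

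Suppose the assertion fails. Then, applying it with $\delta=1/n$ for each $n\in\N^*$, there are integers $j_n\geq 1$ and points $z_n$ belonging to $f^{-j_n}(B_{1/n}[q_0])\cap(B_{1/n}[q_0]\cup E)$ but not to the connected component of that set which contains $q$, and with $\frac{1}{j_n}S_{j_n}f(z_n)\geq\overline c$. The first step is to bound the $j_n$: since $\overline c\geq c_{n_0}\geq a_0>a_0/2$, whereas Proposition \ref{jproposition} gives $\frac1mS_mf(x)\leq a_0/2$ for all $x\in M$ and all $m\geq m_0$, we cannot have $j_n\geq m_0$. Hence $(j_n)$ takes finitely many values, and along a subsequence $j_n\equiv j$ is constant; by compactness of $M$ we may also assume $z_n\to\hat z$.

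The second step identifies $\hat z$. Continuity and $f^{j}(z_n)\in B_{1/n}[q_0]$ give $f^{j}(\hat z)=q_0$. If infinitely many $z_n$ lay in $B_{1/n}[q_0]$, then $\hat z=q_0$, so $f^{j}(q_0)=q_0$ and (arguing as in the next sentence) $\frac1jS_jf(q_0)=\overline c>0$, whence the orbit measure of the periodic point $q_0$ contradicts $\sup_{\mu\in\mathcal{M}_{inv}(f)}\int\phi\,d\mu=0$; so eventually $z_n\in E$, and since $E$ is closed, $\hat z\in E\subset B_{\varepsilon}[x_0]$. As $\hat z$ and $f^{j}(\hat z)=q_0$ both lie in $B_{\varepsilon}[x_0]$, the defining property of $\overline c$ yields $\frac1jS_jf(\hat z)\leq\overline c$, while continuity yields $\frac1jS_jf(\hat z)=\lim_n\frac1jS_jf(z_n)\geq\overline c$; hence $\frac1jS_jf(\hat z)=\overline c$. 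Since also $1\leq j\leq m_0$, this gives $\hat z\in P$, and together with $f^{j}(\hat z)=q_0$ and $\frac1jS_jf(\hat z)=\overline c$ it gives $\hat z\in\tilde P$. Because $q$ was chosen in $\tilde P$ at minimal distance from $q_0$, we have $d(\hat z,q_0)\geq d(q,q_0)$; as $\hat z,q_0\in E$, the defining property of $E$ forces $\{\hat z,q_0\}=\{q,q_0\}$, that is, $\hat z=q$.

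It remains to contradict $z_n\to q$. Here I would argue that, by continuity of $f^{j}$ at $q$ with $f^{j}(q)=q_0$, the point $q$ is interior, relative to $E$, to each set $f^{-j}(B_\delta[q_0])\cap E$; hence for $\delta$ small the connected component of $q$ in $f^{-j}(B_\delta[q_0])\cap(B_\delta[q_0]\cup E)$ contains a fixed relative neighbourhood of $q$ in $E$, which for large $n$ must then contain $z_n$ — the desired contradiction. I expect this last step to be the \emph{main obstacle}: one has to exclude the possibility that the sets $f^{-j}(B_\delta[q_0])\cap E$ possess, arbitrarily close to $q$, connected components other than the one through $q$. This is precisely where the convexity of $E$, and the freedom to take $E$ thin enough that $q$ is the only preimage of $q_0$ in $E$ accumulating at $q$, must be exploited with care; the remaining ingredients are the routine compactness bookkeeping outlined above.
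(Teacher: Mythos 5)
Your reduction to the finitely many return times $j<m_0$ via Proposition \ref{jproposition}, and the identification of the limit point $\hat z$ with $q$ through the choice of $E$ (equivalently, $\tilde P\cap E=\{q\}$), are correct and consistent with what the paper uses. But the step you yourself flag as the main obstacle is a genuine gap, and the route you sketch for closing it cannot work inside your contradiction scheme. The relative neighbourhood of $q$ in $E$ that continuity guarantees to lie in the $q$-component of $f^{-j}\left(B_{\delta}[q_0]\right)\cap\left(B_{\delta}[q_0]\cup E\right)$ is one whose image under $f^j$ fits inside $B_{\delta}(q_0)$, and its radius necessarily shrinks to $0$ as $\delta=1/n\to 0$: the $q$-components form a decreasing family whose intersection is the $q$-component of $f^{-j}(q_0)$, which is just $\{q\}$ because preimages are finite. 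Since $z_n\to q$ at a rate unrelated to $1/n$, nothing forces $z_n$ into that component, and the convexity of $E$ does not help, because the possible disconnection happens inside $f^{-j}(B_{\delta}[q_0])$, not inside $E$. So the final contradiction is not obtained.

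The paper closes exactly this hole by arguing in the opposite direction, with a single $\delta$ fixed at the end: instead of showing that the $q$-component absorbs everything near $q$, it confines every \emph{other} relevant component. Since each point has finitely many preimages, the connected component of $f^{-i}(B_{\delta}(q_0))$ through a given $z\in P$ shrinks to $\{z\}$ as $\delta\to 0$; this yields a $\delta_2(z)$ for which that component sits inside a ball $B_{\delta_1(z)}(z)$ on which, by continuity and by $\tilde P\cap E=\{q\}$, the relevant Birkhoff averages are strictly below $\overline c$; a further $\delta_3$ guarantees that any component meeting $E$ must be attached to some point of the finite set $P$ in the first place. Taking $\delta$ to be the minimum of these finitely many radii gives the statement directly, with no limiting argument over $\delta$. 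If you wish to keep your compactness framework, you must import this localization of the non-$q$ components as an extra lemma; as written, your argument establishes everything except the one implication the proposition actually asserts.
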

\begin{proof}
By the choice of $E,\,\tilde P\cap E=\{q\}$ and so for any $z\not=q$ in $P\cap E$ and $n_z$ such that $f^{n_z}(z)=q_0,\, \frac{1}{n_z}S_{n_z}f(z)$ is strictly smaller than $\overline c$. Thus, by the continuity of $f$ and $\phi$, there exist $\delta_1(z)>0$ such that if $d(z,y)<\delta_1(z)$ we have $\frac{1}{i}S_if(y)<\frac{1}{n_q}S_{n_q}f(q)$. Moreover, for each $\delta_1(z)$ there exist $\delta_2(z)$ such that the connected component of $f^{-i}(B_{\delta_2(z)}(q_0))$ which contain $z$ is contained in $B_{\delta_1(z)}(z)$. Finally there exists $\delta_3>0$ such that, if $f^{-i}(B_{\delta_3}(q_0))$ intersects $E$ then there is some point of $P$ in this component. By taking $\delta=\min_{x\in P}\{\delta_2(x),\delta_3\}$ we are done.
 
\end{proof}


Denote the set $E\cup B_{\delta}[q_0]$ by $I$, we will construct a new endomorphism $\widetilde{f}=T\circ f$, where $T|_{M\setminus I}(z)=z$ , and such that there exist a $\widetilde{f}$-periodic point in $I$ whose average is strictly positive.

Let
$$D=I\cap \left( \bigcup_{i=1}^{\infty} f^{-i}(I) \right).$$
Over $D$ we define the following functions:
$$N_{ret}(x)=\inf\{j\in \N^*:f^j(x)\in I \}$$
$$f_2(x)=f^{N_{ret}(x)}(x)$$
$$ \psi(x)=\frac{1}{N_{ret}(x)} \sum_{i-0}^{N_{ret}(x)-1}\phi(f^i(x))$$

By the Proposition \ref{cconexa}, let $W_0$ be the connected component of $f_2^{-1}(B_{\delta}[q_0])$ which contains $q$, if $z\in D$ and $\psi(z)>\psi(q)$, then $z\in W_0$. Denote by $z_{\max}$ the point in $\bar{W_0}$ that maximizes $\psi(z)$. Choose $\alpha \in W_0$ sufficiently close to $z_{\max}$ such that the inequality
\begin{equation}
4 m_0 [\psi(z_{\max})-\psi(\alpha)]\leq |\psi(\alpha)-\psi(q)|
\label{alfa}
\end{equation}
is true, and such that $f_2(\alpha)\in int(B_{\delta}[q_0])$.

Now we consider $L$ to be the line segment joining $\alpha$ and $f_2(\alpha)$, $T_1:M\rightarrow M$ an homeomorphism mapping $f_2(\alpha)$ to $\alpha$, that is, $T_1(f_2(\alpha))=\alpha$ and such that $T_1$ is the identity outside $V(L)$, where 
$$V(L)=\{ z\in M: d(z,L)< \delta_3 \},$$ and $\delta_3>0$, chosen such that $V(L)$ is contained in the interior of $I$.

In figure (\ref{T2}), the shadow part is the neighborhood of the line segment $L$.


\begin{figure}[h]
	\centering
		\includegraphics[scale=0.4]{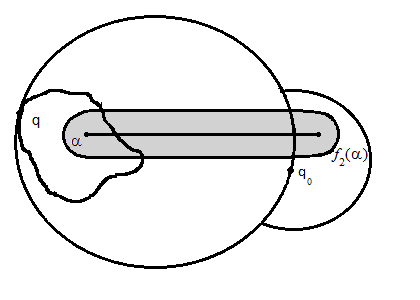}
	\caption{Neighborhood of $L$.}
	\label{T2}
\end{figure}

We define now $\widetilde f$ by the composition $\widetilde f = T_1\circ f$. Note that $\alpha$ is a $n_q$ periodic point for $\widetilde{f}$ and that the $\phi$ average over the orbit of $\alpha$ is $\psi(\alpha)\ge\frac{1}{n_q}S_{n_q}f(q)>0$. Yet the dynamics defined by $\widetilde{f}$ may have some new invariant measures whose $\phi$ average is strictly larger than $\psi(\alpha)$. 
Still, it should be clear that, as in the proof of Lemma \ref{somamenor} if $z$ is such that the $\widetilde{f}$ orbit of $z$ returns to $D$ finitely many times, then $\limsup_{n\to\infty}\frac{1}{n}S_nf(z)\le 0$, and if $z\in D$ returns infinitely-many times by $\widetilde{f}$ to the set $I$, but its orbit does not intersect $W_0$ (or just intersects it finitely many times), then if $n_1,n_2,\ldots$ are the return times to $D$, we have, by \ref{cconexa}:
 \begin{equation}\label{naoemw0}
\lim_{k\rightarrow \infty}\frac{1}{n_k}\sum_{i=0}^{n_k-1}\phi(\widetilde{f}^i(x))\leq \psi(q)=\overline c.
\end{equation}

Now we define $W_{\alpha}=W_0\bigcap f_2^{-1}\left( V(L) \right )$. If $z\in W_0 \setminus W_{\alpha}$ then $f_2(z)\in B_{\delta}[q_0]$ and we remark that $ B_{\delta}[q_0]$ is disjoint from $W_0$. So, if there is some future time $n_1>n_q$ such that $\widetilde{f}^{n_1}(z) \in W_0$,  we can write $n_1=n_q+k$ with $f^{n_q}(z)=f_2(z)\in B_{\delta}[q_0]$ and $f^{n_q+k}(z) \in W_{0}$. The following estimate will be useful

\begin{eqnarray}\label{naoemwalpha}
\displaystyle \frac{1}{n_{1}}\sum_{i=0}^{n_{1}-1}\phi(\widetilde{f}^i(z))-\psi(\alpha)=&\displaystyle \frac{n_q\psi(z)+k\psi(f_2(z))}{n_z+k}-\psi(\alpha) \nonumber \\
 \leq &\displaystyle \frac{n_q \psi(z_{\max})+k\psi(q)}{n_q+k}-\psi(\alpha) \nonumber \\ \leq & \displaystyle\frac{n_q \psi(z_{\max})+\psi(q)}{n_z+1}-\psi(\alpha) \nonumber \label{eqq}\\  \leq & \displaystyle\frac{n_q  (\psi(z_{\max})-\psi(\alpha))-(\psi(\alpha)-\psi(q))}{n_q+1}\leq 0, 
\end{eqnarray}
where the last inequality follows from (\ref{alfa}).

\subsection{The last pertubation}

In order to finish the demonstration of the Theorem (\ref{maintheorem}) we need to control the averages of those elements which have infinitely many returns on $W_{\alpha}$. In this section we construct a new pertubation $T_2$ such that $\alpha$ will be a  source for the new endomorphism $T_2 \circ \widetilde{f}$, and $W_{\alpha}$ is contained in its basin of repulsion.   

Let $\tilde{D}=I\cap \left( \bigcup_{i=1}^{\infty} \widetilde{f}^{-i}(I) \right )$ be the set of points who return to $I$ by the function $\widetilde{f}$. Over this set we define the following functions:
\begin{eqnarray}
\tilde{N}_{ret}(x)&=& \inf \{j\in \N^*: \widetilde{f}^j(x)\in I  \} \nonumber \\
\widetilde{f}_2(x)&=& \widetilde{f}^{\tilde{N}_{ret}(x)}(x) \nonumber \\
\tilde{\psi}(x)&=& \frac{1}{\tilde{N}_{ret}(x)}S_{\tilde{N}_{ret}(x)}\widetilde{f}(x) \nonumber
\end{eqnarray}

The following propositions are immediate from the definitions:
\begin{itemize}
\item[a)] $\tilde{D}=D$;
\item[b)] $\tilde{N}_{ret}(x)=N_{ret}(x)$ for all $x \in \tilde{D}$;
\item[c)] $\tilde{\psi}(x)=\psi(x)$ for all $x \in \tilde{D}$;
\item[d)] $\widetilde{f}_2(x)=T_1 \circ f_2(x)$ and if $x\notin f_2^{-1}\left(V(L)\right)$, then $\widetilde{f}_2(x)=f_2(x)$.
\end{itemize}

\

Let $\psi_{\max}:I\rightarrow \R$ be the following function:
$$\psi_{\max}(z)=\max_{y\in B_{d(\alpha,z)}[\alpha]}\psi(y).$$
Note that $\psi_{\max}(\alpha)=\psi(\alpha)$, $\psi(z)\leq \psi_{\max}(z)$, for all $z\in I$ and if $d(\alpha,z_1)<d(\alpha,z_2)$ then $\psi_{\max}(z_1)\leq \psi_{\max}(z_2).$

Define the function $P:\R \rightarrow \R$: 
$$P(s)=\sup_{z\in B_s(\alpha)} \psi(z)-\psi(\alpha). $$
This function is continuous non decreasing with $P(0)=0$, moreover, given $z$ with $d(z,\alpha)=s$, then $P(s)=\psi_{\max}(z)-\psi(\alpha)$.

Let $R_1, R_2,\, 0<R_1<R_2$ be such that $W_{\alpha}\subset B_{R_1}(\alpha) \subset B_{R_2}(\alpha)$ and $B_{R_2}(\alpha) \subset E$ as shown in figure (\ref{T1}): 

\begin{figure}[h]
	\centering
		\includegraphics[scale=0.7]{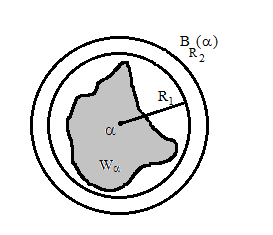}
	\caption{Perturbation region}
	\label{T1}
\end{figure}

The perturbation $T_2$ differs from the identity only at  $B_{R_2}(\alpha)$. Define the set $A_s=\{x\in B_{R_1}(\alpha):s\leq d(\alpha,z) \leq R_1 \}$ for all $0\leq s \leq R_1$ and we define the function
$$Q(s)=\inf_{z\in A_s}d(\widetilde{f}_{2}(z),\alpha).$$

$Q(s)$ is a non-decreasing continuous function. Since for all $z\in M,\,\widetilde{f}_2^{-1}(z)$ is a finite set,  $\alpha$ has only one pre-image by $\widetilde{f}_2$ in $B_{R_1}(\alpha)$. This implies $Q(0)=0$, and if $s>0$ then $Q(s)>0$.

Given $s_0=R_1$, we define two sequences $s_i$ and $r_i$ by:
$$P(s_i)=\frac{1}{2^i}P(R_1)\; \mbox{  and  }\; r_i=\min \{ Q(s_i), s_i \}.$$

Now we define the function $\lambda(z)$ by the rule:
If $r_{k+1}< d(z,\alpha)< r_{k}<R_1$ then $$s_k\leq \lambda(z) d(z,\alpha) \leq s_{k-1} .$$

One such function can be $\displaystyle \lambda(z)=\frac{s_k+(s_{k-1}-s_k)\frac{d(x,\alpha)-r_{k+1}}{r_k-r_{k+1}}}{d(z,\alpha)}$.

If $d(z,\alpha)>R_1$ then $\lambda(z)=1.$

Finally consider $\hat{f}=T_2\circ \widetilde{f}$, where $T_2$ on $B_{R_2}(\alpha)$ is the identity, and $T_2$ on $B_{R1}(\alpha)$ is defined by
$$T_2(z)=\lambda(z)(z-\alpha)+\alpha.$$ 

Note the function $T_2$ is a continuous function and a radial expansion with variable speed $\lambda(z)$.
Define $\hat{f}_2(z)=T_2\circ \widetilde{f}_2(z)$.

\begin{lemma} \label{lemaseq}
If $z\in W_{\alpha}$ with $d(\hat{f}_2(z),\alpha)<s_k$ then
$$d(z,\alpha) \leq s_{k+1}.$$
\end{lemma}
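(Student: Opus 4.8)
The plan is to unwind the definitions of $\hat f_2$, $T_2$, $\lambda$, and the sequences $(s_i)$, $(r_i)$, and show that the hypothesis $d(\hat f_2(z),\alpha)<s_k$ forces $z$ to lie in a thin enough annulus around $\alpha$. Write $d(z,\alpha)=t$ and suppose, toward a contradiction, that $t>s_{k+1}$; I must derive $d(\hat f_2(z),\alpha)\ge s_k$. First I would locate $t$ in the partition of $(0,R_1)$ given by the $r_i$'s: since the $r_i$ are nonincreasing and $r_i\le s_i$, the assumption $t>s_{k+1}\ge r_{k+1}$ places $t$ in some interval $(r_{j+1},r_j]$ with $j\le k$. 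On such an interval the definition of $\lambda$ gives $\lambda(z)\,t\ge s_j\ge s_k$ (using monotonicity of $(s_i)$), so the radial coordinate of $T_2(\widetilde f_2(z))$ relative to $\alpha$ is controlled from below once I know the radial coordinate of $\widetilde f_2(z)$.

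The second ingredient is the function $Q$: since $z\in W_\alpha\subset B_{R_1}(\alpha)$ and $d(z,\alpha)=t$ with $t\in(r_{j+1},r_j]\subset(0,R_1]$, we have $z\in A_{t}\subset A_{r_{j+1}}$, hence $d(\widetilde f_2(z),\alpha)\ge Q(t)\ge Q(r_{j+1})$. Here I would want to compare $Q(r_{j+1})$ with the relevant $s$-value; by the definition $r_{i}=\min\{Q(s_i),s_i\}$, and because $Q$ is nondecreasing, one gets $Q(r_{j+1})\ge$ (something bounded below in terms of $s_{j+1}$ or $s_j$) — this is the step where I would need to be careful about whether the minimum defining $r_{j+1}$ is attained at $Q(s_{j+1})$ or at $s_{j+1}$, and I expect the intended bookkeeping is that in either case $d(\widetilde f_2(z),\alpha)$ stays outside the ball of radius $r_{j+1}$, so that $T_2$ applies its expansion factor $\lambda$ there. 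Combining the lower bound on $d(\widetilde f_2(z),\alpha)$ with $\lambda\ge s_j/(\text{radius})$ on the appropriate shell, the radial expansion pushes $\widetilde f_2(z)$ out to distance at least $s_j\ge s_k$ from $\alpha$, so $d(\hat f_2(z),\alpha)\ge s_k$, contradicting the hypothesis. Therefore $t\le s_{k+1}$, i.e. $d(z,\alpha)\le s_{k+1}$.

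The main obstacle I anticipate is precisely the interleaving of the three sequences $s_i$, $r_i$ and the values $Q(s_i)$: one must verify that when $z$ sits at radius $t\in(r_{j+1},r_j]$, the image $\widetilde f_2(z)$ has radius at least $r_{j+1}$ (so it lies in the region where $\lambda$ is genuinely expanding toward $s_j$), and then that $\lambda(z)\,t$ is comparable to $s_j$ rather than collapsing. Both hinge on the defining inequality $r_i=\min\{Q(s_i),s_i\}$ together with monotonicity of $P$, $Q$, and $d(\cdot,\alpha)\mapsto\lambda$; I would organize the proof as a short case split according to which term realizes the minimum in $r_{j+1}$, and in each case chase the inequalities $P(s_{j+1})=\tfrac12 P(s_j)$, $Q$ nondecreasing, and the piecewise-linear formula for $\lambda$ to the stated conclusion. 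No ergodic theory enters; it is purely a metric/combinatorial estimate about the radial expansion $T_2$.
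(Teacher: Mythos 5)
Your overall strategy is the same as the paper's: argue the contrapositive, use $Q$ to bound $d(\widetilde{f}_2(z),\alpha)$ from below in terms of $d(z,\alpha)$, then read off the radial expansion from the definition of $\lambda$. The paper's proof is exactly this, in one line: if $d(z,\alpha)>s_{k+1}$ then $z\in A_{s_{k+1}}$, so $d(\widetilde{f}_2(z),\alpha)\ge Q(s_{k+1})\ge\min\{Q(s_{k+1}),s_{k+1}\}=r_{k+1}$; hence $\widetilde{f}_2(z)$ lies in some annulus $r_{j+1}<d(\cdot,\alpha)\le r_j$ with $j\le k$ (or outside $B_{R_1}(\alpha)$, where $\lambda=1$), and there by construction $\lambda(\widetilde{f}_2(z))\,d(\widetilde{f}_2(z),\alpha)\ge s_j\ge s_k$, i.e.\ $d(\hat{f}_2(z),\alpha)\ge s_k$.

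Two points in your write-up need fixing. First, in your opening paragraph you locate $t=d(z,\alpha)$ in the $r$-partition and evaluate $\lambda$ at $z$; but $T_2$ acts on $\widetilde{f}_2(z)$, so it is the radius of $\widetilde{f}_2(z)$ that must be placed in the annuli $(r_{j+1},r_j]$ and $\lambda$ evaluated there. You do correct course afterwards, but the displayed inequality $\lambda(z)t\ge s_j$ is not the relevant one. Second, and more seriously, the chain $d(\widetilde{f}_2(z),\alpha)\ge Q(t)\ge Q(r_{j+1})$ is a dead end: there is no useful lower bound on $Q(r_{j+1})$, since $Q(s)$ can be much smaller than $s$, and no case split on which term attains the minimum in $r_{j+1}=\min\{Q(s_{j+1}),s_{j+1}\}$ will rescue it. The bound you actually need is $Q(t)\ge Q(s_{k+1})\ge r_{k+1}$, which follows immediately from $t>s_{k+1}$, the monotonicity of $Q$, and the fact that $r_{k+1}$ is \emph{defined} as a minimum one of whose terms is $Q(s_{k+1})$ --- no case analysis required. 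With that substitution your argument closes and coincides with the paper's.
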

\begin{proof}
If $d(z,\alpha)>s_{k+1}$ then $d(\widetilde{f}_2(z), \alpha)>r_{k+1}$ this implies $$d(\hat{f}_2(z),\alpha)=d(\lambda(\widetilde{f}_2(z))(\widetilde{f}_2(z)-\alpha)+\alpha,\alpha)=\lambda(\widetilde{f}_2(z))d(\widetilde{f}_2(z),\alpha)\geq s_k ,$$ we can conclude, if $d(\hat{f}_2(z),\alpha)<s_k$, then $d(z,\alpha) \leq s_{k+1}.$
\end{proof} 

\begin{proposition}\label{proposicaox}
If $z, \hat{f}_2(z), \ldots, \hat{f}_2^N(z) \in W_{\alpha}$, then $d(z,\alpha) \leq s_N$.
\end{proposition}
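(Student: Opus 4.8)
The plan is to induct on $N$, using Lemma~\ref{lemaseq} as the one-step engine. The base case $N=0$ is immediate, since $z\in W_\alpha\subset B_{R_1}(\alpha)$ forces $d(z,\alpha)\le R_1=s_0$. For the inductive step, suppose the statement holds for $N-1$, and let $z,\hat f_2(z),\ldots,\hat f_2^{N}(z)\in W_\alpha$. Applying the inductive hypothesis to the point $\hat f_2(z)$, whose forward orbit $\hat f_2(z),\ldots,\hat f_2^{N}(z)$ consists of $N$ points in $W_\alpha$, we get $d(\hat f_2(z),\alpha)\le s_N$.

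Now I would like to feed this into Lemma~\ref{lemaseq}, which says that $d(\hat f_2(z),\alpha)<s_N$ implies $d(z,\alpha)\le s_{N+1}$. The mild technical point is that the inductive hypothesis gives $d(\hat f_2(z),\alpha)\le s_N$, a non-strict inequality, whereas Lemma~\ref{lemaseq} is phrased with a strict one. This is handled either by observing that the contrapositive argument in Lemma~\ref{lemaseq} actually shows $d(z,\alpha)>s_{N+1}\implies d(\hat f_2(z),\alpha)\ge s_N$ (so the conclusion $d(z,\alpha)\le s_{N+1}$ follows already from $d(\hat f_2(z),\alpha)<s_N$, and one treats the boundary case $d(\hat f_2(z),\alpha)=s_N$ separately by a limiting/continuity remark on $\lambda$ and $Q$), or, more cleanly, by noting that the whole chain of radii $s_i,r_i$ can be chosen so that $\lambda(\widetilde f_2(z))\,d(\widetilde f_2(z),\alpha)$ is continuous and monotone in $d(z,\alpha)$, so that the $\le$ version of Lemma~\ref{lemaseq} holds verbatim. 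In either case we conclude $d(z,\alpha)\le s_{N+1}$.

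Wait — I should double-check the index bookkeeping: the statement to be proved is $d(z,\alpha)\le s_N$ for an orbit $z,\ldots,\hat f_2^N(z)\in W_\alpha$ of length $N+1$. With the inductive step as above, an orbit of length $N+1$ yields, after one application of the hypothesis to $\hat f_2(z)$ (orbit of length $N$), $d(\hat f_2(z),\alpha)\le s_{N-1}$, and then Lemma~\ref{lemaseq} gives $d(z,\alpha)\le s_N$, which is exactly what we want. So the induction closes. The main (and really the only) obstacle is the strict-versus-non-strict inequality mismatch in applying Lemma~\ref{lemaseq}; everything else is a routine finite induction. I would therefore either strengthen Lemma~\ref{lemaseq} to the non-strict form at its point of statement, or insert a one-line continuity remark here ensuring that the sequences $(s_i)$, $(r_i)$, and the function $\lambda$ are chosen so that the radial map $z\mapsto d(\hat f_2(z),\alpha)$ on $W_\alpha$ is continuous and non-decreasing in $d(z,\alpha)$, which makes the boundary case harmless.
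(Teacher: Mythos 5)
Your proof is correct and is essentially identical to the paper's: the same induction on $N$, applying the inductive hypothesis to $y=\hat f_2(z)$ and then invoking Lemma~\ref{lemaseq} to step from $d(\hat f_2(z),\alpha)\leq s_{N-1}$ to $d(z,\alpha)\leq s_N$. The strict-versus-non-strict mismatch you flag is a genuine (if minor) imprecision that the paper's own proof silently ignores, and either of your fixes (restating Lemma~\ref{lemaseq} in the non-strict form its contrapositive nearly gives, or a monotonicity/continuity remark on $\lambda$ and $Q$) would make the argument airtight.
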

\begin{proof}
Let us prove the proposition by induction over $N$. For $N=0$ we have $d(z,\alpha)\leq R_1$ therefore $d(z,\alpha) \leq s_0$, the induction hypothesis is that we assume true the assertion for $k=N-1$.

For $k=N$, if $y=\hat{f}_2(z)$ and as $y, \hat{f}_2(y), \ldots, \hat{f}_2^{N-1}(y) \in W_{\alpha}$ then by the induction hypothesis:
$$d(y,\alpha)\leq s_{N-1} \Rightarrow d(\hat{f}_2(z),\alpha)\leq s_{N-1} \mbox{ by lemma (\ref{lemaseq}) } d(z,\alpha) \leq s_N.$$
\end{proof}  

\begin{lemma} \label{lemadesig}
If $z, \hat{f}_2(z), \ldots, \hat{f}_2^k(z) \in W_{\alpha}$, then $N_{ret}(\hat{f}_2^i(z))=N_{ret}(q)$ for all \linebreak $i=1,\ldots,k$ and
$$\frac{1}{kN_{ret}(q)}S_{kN_{ret}(q)}\hat{f}(z)\leq \psi(\alpha)+\frac{1}{k}(\psi(x_{\max})-\psi(\alpha))$$
\end{lemma}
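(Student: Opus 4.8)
The strategy is to cut the orbit segment $z,\hat f(z),\dots,\hat f^{kN_{ret}(q)-1}(z)$ into $k$ consecutive return blocks of equal length $N_{ret}(q)$, to observe that on each block the perturbed map $\hat f$ coincides with $f$ (so that the block contributes exactly $N_{ret}(q)\,\psi(\hat f_2^{\,i}(z))$ to the Birkhoff sum), and then to estimate the resulting average of $\psi$ along the orbit by means of Proposition \ref{proposicaox} and the geometric decay of the $P(s_i)$. The first point to settle is that $N_{ret}$ is constant, equal to $n_q:=N_{ret}(q)$, on $W_0$: if $w\in D$ satisfies $f^{j}(w)\notin I$ for $0<j<N_{ret}(w)$ and $f^{N_{ret}(w)}(w)$ lies in the interior of $I$, then, $M\setminus I$ being open, the same holds for every $w'$ close to $w$, so $N_{ret}$ is locally constant near $w$; since $W_0$ is connected, contains $q$, and $f^{n_q}(q)=q_0\in\mathrm{int}(B_\delta[q_0])$, we conclude $N_{ret}\equiv n_q$ on $W_0\supset W_\alpha$. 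In particular $N_{ret}(\hat f_2^{\,i}(z))=n_q$ for $i=1,\dots,k$ (the first assertion of the lemma), and $\hat f_2^{\,i}(z)=\hat f^{\,in_q}(z)$, so the $k$ blocks of length $n_q$ tile $\{0,\dots,kn_q-1\}$.

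Next, since $\widetilde f=T_1\circ f$ with $T_1$ equal to the identity off $V(L)\subset\mathrm{int}(I)$, and $\hat f=T_2\circ\widetilde f$ with $T_2$ equal to the identity off $B_{R_1}(\alpha)\subset E\subset I$, and since for $0<j<n_q$ the iterate $f^{j}(\hat f_2^{\,i}(z))$ lies outside $I$ by definition of $N_{ret}$, an easy induction on $j$ shows that neither $T_1$ nor $T_2$ acts on these iterates and hence $\hat f^{\,j}(\hat f_2^{\,i}(z))=f^{j}(\hat f_2^{\,i}(z))$ for $0\le j<n_q$. Consequently the Birkhoff sum over the $i$-th block is $\sum_{j=0}^{n_q-1}\phi\bigl(f^{j}(\hat f_2^{\,i}(z))\bigr)=n_q\,\psi(\hat f_2^{\,i}(z))$, and therefore
$$\frac{1}{kn_q}S_{kn_q}\hat f(z)=\frac1k\sum_{i=0}^{k-1}\psi(\hat f_2^{\,i}(z)).$$

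Finally, applying Proposition \ref{proposicaox} to the segment $\hat f_2^{\,i}(z),\dots,\hat f_2^{\,k}(z)\subset W_\alpha$, which has length $k-i+1$, gives $d(\hat f_2^{\,i}(z),\alpha)\le s_{k-i}$; using $\psi\le\psi_{\max}$, the identity $\psi_{\max}(w)-\psi(\alpha)=P(d(w,\alpha))$, the monotonicity of $P$, and $P(s_{k-i})=2^{-(k-i)}P(R_1)$, we obtain
$$\sum_{i=0}^{k-1}\bigl(\psi(\hat f_2^{\,i}(z))-\psi(\alpha)\bigr)\le\sum_{i=0}^{k-1}P(s_{k-i})=P(R_1)\sum_{m=1}^{k}2^{-m}<P(R_1)\le\psi(x_{\max})-\psi(\alpha),$$
where the last inequality holds because $P(R_1)=\sup_{B_{R_1}(\alpha)}\psi-\psi(\alpha)$ while every $w\in D$ with $\psi(w)>\psi(q)$ belongs to $W_0$, so $\sup_{B_{R_1}(\alpha)}\psi\le\max_{\overline{W_0}}\psi=\psi(x_{\max})$. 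Dividing by $k$ and adding $\psi(\alpha)$ to the displayed identity of the previous paragraph yields the stated bound. The main obstacle is not the (elementary) estimate of this last paragraph but the bookkeeping of the first two: verifying that $N_{ret}$ is genuinely constant on $W_0$ and, above all, that neither $T_1$ nor $T_2$ alters the itinerary of $z$ strictly between two consecutive returns to $I$, which is exactly what makes the block decomposition and the clean formula $\frac1k\sum_i\psi(\hat f_2^{\,i}(z))$ legitimate.
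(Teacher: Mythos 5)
Your proof is correct and follows essentially the same route as the paper: block decomposition of the Birkhoff sum into return segments of length $N_{ret}(q)$, the bound $\psi\le\psi_{\max}$ combined with Proposition \ref{proposicaox} to get $d(\hat f_2^{\,i}(z),\alpha)\le s_{k-i}$, and the geometric series $\sum_m 2^{-m}P(R_1)\le P(R_1)\le\psi(z_{\max})-\psi(\alpha)$. You additionally justify two steps the paper only asserts (the local constancy of $N_{ret}$ on $W_0$ and the fact that $T_1$, $T_2$ do not disturb the orbit between returns to $I$), which is a welcome bonus rather than a deviation.
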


\begin{proof}
If $z, \hat{f}_2(z), \ldots, \hat{f}_2^k(z) \in W_{\alpha}$, then $N_{ret}(\hat{f}_2^i(z))=N_{ret}(q)$ for all $i=1,\ldots,k$.
Note that 
\begin{eqnarray}
\frac{1}{kN_{ret}(q)}S_{kN_{ret}(q)}\hat{f}(z)&=& \frac{1}{kN_{ret}(q)}\sum_{i=0}^{k-1}N_{ret}(q) \psi (\hat{f}_2^i(z)) \nonumber \\
& \leq & \frac{1}{k}\sum_{i=0}^{k-1}\psi_{\max} (\hat{f}_2^i(z)) \mbox{ and as } P(d(z,\alpha))=\psi_{\max}(z)-\psi(\alpha) \nonumber \\
& \leq & \psi(\alpha)+\frac{1}{k}\sum_{i=0}^{k-1}P(d(\hat{f}_2^i(z),\alpha)). \nonumber
\end{eqnarray}

By the proposition (\ref{proposicaox}) $d(\hat{f}_2^i(z),\alpha)\leq s_{k-i}$, therefore
$$P(d(\hat{f}_2^i(z),\alpha))\leq P(s_{k-i})=\frac{1}{2^{k-i}}P(R_1),$$
moreover, $P(R_1)\leq \psi(z_{max})-\psi(\alpha)$. This way we can conclude:

\begin{eqnarray}
\frac{1}{k} \sum_{i=0}^{k-1}P(d(\hat{f}_2^i(z),\alpha)) &\leq & \frac{1}{k}\left( \frac{1}{2^k}+\frac{1}{2^{k-1}}+\ldots +\frac{1}{2}\right) P(R_1) \nonumber \\
& \leq & \frac{1}{k}P(R_1) \leq \frac{1}{k} \left( \psi(x_{\max})-\psi(\alpha) \right), \nonumber
\end{eqnarray}
and we are done.
\end{proof}

As the $\phi$ integral over the measure equidistributed  over the $\hat f$ orbit of $\alpha$ is $\psi(\alpha)$, the final step in the proof of Theorem \ref{maintheorem} is
\begin{proposition}
For all $z\in M,\, \lim_{n\to\infty}\frac{1}{n}S_n\hat f(z)\le \psi(\alpha)$. 
\end{proposition}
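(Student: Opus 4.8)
The plan is to classify the $\hat{f}$-orbit of an arbitrary $z\in M$ according to how it meets the nested regions $I\supset W_0\supset W_\alpha$, bootstrapping from the estimates already established for $\widetilde{f}$ by means of the radial-escape lemmas. First I would dispose of the easy reduction. Since $T_1$ and $T_2$ are both supported inside $I$, the map $\hat{f}$ agrees with $f$ off $I$; hence if the $\hat{f}$-orbit of $z$ meets $I$ only finitely often then, exactly as in Lemma~\ref{somamenor}, $\limsup_n\frac1n S_n\hat{f}(z)=\limsup_n\frac1n S_nf(\hat{f}^{\overline n}(z))\le 0<\psi(\alpha)$ by Proposition~\ref{jproposition}. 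So assume the orbit returns to $I$ infinitely often; let $n_0<n_1<\cdots$ be its successive return times and $w_k=\hat{f}^{n_k}(z)\in D$. Then $w_{k+1}=\hat{f}_2(w_k)$, $n_{k+1}-n_k=N_{ret}(w_k)$, and since $\hat{f}=f$ strictly between consecutive returns one has $\sum_{i=n_k}^{n_{k+1}-1}\phi(\hat{f}^i(z))=N_{ret}(w_k)\psi(w_k)$. Thus, up to the vanishing boundary term $\frac1{n_k}S_{n_0}\hat{f}(z)$ and a routine estimate (again via Proposition~\ref{jproposition}) on a truncated block when $n$ lies strictly between two returns, it suffices to prove
$$\limsup_{k\to\infty}\frac1{n_k}\sum_{j=0}^{k-1}N_{ret}(w_j)\,\psi(w_j)\le\psi(\alpha).$$

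Recall that $\psi(\alpha)\ge\overline c$ and that, by the remark accompanying the definition of $W_0$, $\psi(z)>\psi(q)=\overline c$ forces $z\in W_0$; hence $\psi(w_j)\le\overline c\le\psi(\alpha)$ whenever $w_j\notin W_0$. If only finitely many $w_j$ lie in $W_0$, the displayed $\limsup$ is then $\le\overline c$, as in~(\ref{naoemw0}). If only finitely many $w_j$ lie in $W_\alpha$, I would group each $w_j\in W_0\setminus W_\alpha$ together with the blocks following it up to the next visit to $W_0$: by~(\ref{naoemwalpha}) such a macro-block has $\phi$-average $\le\psi(\alpha)$, while the remaining blocks ($w_j\notin W_0$) contribute $\psi\le\overline c$, and again the $\limsup$ is $\le\psi(\alpha)$. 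This leaves the substantive case: infinitely many $w_j\in W_\alpha$.

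In that case I would cut the index set into maximal runs of consecutive $j$ with $w_j\in W_\alpha$, separated by excursions. By Proposition~\ref{proposicaox} the only point whose entire forward $\hat{f}_2$-orbit remains in $W_\alpha$ is $\alpha$ itself, so unless $z$ lies on the periodic orbit of $\alpha$ every run is finite; for a run of length $\ell$, Lemma~\ref{lemadesig} together with Proposition~\ref{proposicaox} bounds the $\phi$-mass over its $\ell\,N_{ret}(q)$ steps by $\ell\,N_{ret}(q)\psi(\alpha)+C$, where $C=N_{ret}(q)\bigl(\psi(z_{\max})-\psi(\alpha)\bigr)$ is a \emph{fixed} constant --- the decisive point being that a run's excess over $\psi(\alpha)$ does not grow with its length. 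By the two preceding cases every excursion has $\phi$-average $\le\psi(\alpha)$. Summing, $\sum_{j<k}N_{ret}(w_j)\psi(w_j)\le n_k\psi(\alpha)+C\,R_k$, where $R_k$ counts the $W_\alpha$-runs completed before time $n_k$, so everything comes down to showing $R_k=o(n_k)$.

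That last estimate is where I expect the real difficulty to be, and it is the reason for the variable-speed radial expansion $T_2$: one must rule out an orbit that keeps re-entering $W_\alpha$ with positive time-frequency, for otherwise the individually harmless per-run excesses would accumulate to positive density. The idea would be to use Lemma~\ref{lemaseq} --- each iterate of $\hat{f}_2$ on $W_\alpha$ advances a point at least one step outward along the scale $\{s_i\}$ --- so that a run starting in the annulus $s_{i+1}<d(\cdot,\alpha)\le s_i$ has length at most of order $i$ and ejects the orbit from $W_\alpha\subset B_{R_1}(\alpha)$; and then to observe that before the orbit can reach $W_\alpha$ again it must travel through $B_\delta[q_0]$ back to $W_0$, so that~(\ref{naoemwalpha}) applies to the intervening excursion and forces it to be long --- in fact to lengthen with $i$, since it must undo the accumulated radial stretching. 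Quantifying this should make the run-endpoints before time $n_k$ separated by amounts tending to infinity, giving $R_k=o(n_k)$ and hence the Proposition; alternatively, since only $z$ generic with respect to an $\hat{f}$-ergodic measure is actually needed for Theorem~\ref{maintheorem}, one could invoke Kac's lemma for the first-return map to $W_\alpha$ together with the uniqueness of its infinite-sojourn point $\alpha$ to the same effect.
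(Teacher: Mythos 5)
Your case analysis and the decomposition into $W_\alpha$-runs separated by excursions match the paper's, and your bound on a single run via Lemma~\ref{lemadesig} (excess at most the fixed constant $C=N_{ret}(q)(\psi(z_{\max})-\psi(\alpha))$, independent of the run's length) is exactly the right ingredient. But the way you close the argument --- bounding each excursion merely by ``average $\le\psi(\alpha)$'' and thereby reducing everything to $R_k=o(n_k)$ --- is a genuine gap, and in fact a dead end. Nothing forces the number of runs to be sublinear in time: an orbit could in principle alternate a short run with a single excursion block forever, making $R_k$ comparable to $n_k$; Lemma~\ref{lemaseq} only says that \emph{long} runs must start very close to $\alpha$, it puts no lower bound on the length of the excursions between runs; and Kac's lemma gives the return time to $W_\alpha$ a finite \emph{mean}, i.e.\ $R_k=O(n_k)$, not $o(n_k)$. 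So the ``real difficulty'' you isolate is not provable by the routes you sketch.

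The paper avoids needing any density estimate on the runs. The point you did not use is that the excursion blocks carry a quantitative \emph{deficit}, not just a non-excess: for $w_l\notin W_\alpha$ the paper bounds $\psi(w_l)\le\psi(q)$ (using Proposition~\ref{cconexa} outside $W_0$ and estimate~(\ref{naoemwalpha}) for the blocks through $W_0\setminus W_\alpha$), so each excursion, having at least one block, contributes at most $(n_{a_{j+1}}-n_{b_j})\psi(\alpha)-(\psi(\alpha)-\psi(q))$. Meanwhile condition~(\ref{alfa}), which is precisely why $\alpha$ was chosen with the factor $4m_0$ and why $N_{ret}(q)<m_0$ matters, gives $C\le\frac{1}{2}(\psi(\alpha)-\psi(q))$. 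Pairing each run with the excursion that follows it, the run's excess $C$ is absorbed by the excursion's deficit, so every run-plus-excursion cycle has $\phi$-average at most $\psi(\alpha)$ outright, and summing over cycles finishes the proof with no control on $R_k$ whatsoever. Your argument has all the pieces for this except the observation that excursions are strictly below $\psi(\alpha)$ by a definite amount tied to~(\ref{alfa}); without it the proof does not close.
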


\begin{proof}
First, note that, if the $\hat f$ orbit of $z$ does visit $W_0$ infinitely many times, then \ref{naoemw0} and the same argument applied in lemmas \ref{casoI} and \ref{somamenor} show the 
result.

Second, if the $\hat f$ orbit of $z$ visits $W_0$ infinitely many times, but only visits $W_{\alpha}$ a finite number of times, then using (\ref{naoemwalpha}) and again using the reasoning in lemmas \ref{casoI} and \ref{somamenor} we have the result.

Now assume $z\in W_{\alpha}$ returns to $W_{\alpha}$ infinitely many times, and let $n_j$ the sequence of times such that $\hat{f}^{n_j}(z)=(T_2\circ \widetilde{f})^{n_j}(x)\in I$, where $n_0=0$ and $n_{i+1}=n_i+N_{ret}(\hat f ^{n_i}(x))$. Consider the following subsequences of $(n_k)_{k\in \N}$:

\begin{itemize}

\item $a_j$, where $a_1=0$ and $a_{i+1}$ is the smallest integer larger than $a_i$ such that $\hat f ^{n_{a_{i+1}}}(x)$ is in  $W_{\alpha}$, but $\hat f ^{n_{a_{i+1}-1}}(x)$ is not.

\item $b_j$, where $b_i$ is the smallest integer larger than $a_i$ such that $\hat f ^{n_{b_i -1}}(x)$ is in $W_{\alpha}$, but $\hat f ^{n_{b_i}}(x)$ is not.

\end{itemize}

We have that:

\begin{eqnarray}\label{equacao12}
\frac{1}{n_{a_k}} S_{n_{a_k}}\hat f(x)=\displaystyle \frac{1}{n_{a_k}} \displaystyle \sum_{l=0}^{a_k-1}N_{ret}(\hat f^{n_l}(x))\psi(\hat f^{n_l}(x))=\\ = \frac{1}{n_{a_k}} \displaystyle \sum_{j=1}^{k-1} \left( \displaystyle \sum_{l=a_j}^{b_j-1} N_{ret}(\hat f^{n_l}(x))\psi(\hat f^{n_l}(x)) + \displaystyle \sum_{l=b_j}^{a_{j+1}-1} N_{ret}(\hat f^{n_l}(x))\psi(\hat f^{n_l}(x)) \right)\nonumber,
\end{eqnarray}

furthermore if $a_j\leq i\leq b_j-1$ then $\hat{f}^{n_i}(x)\in W_{\alpha}$ and if $b_j\leq i\leq a_{j+1}-1$ then $\hat{f}^{n_i}(x)\notin W_{\alpha}$, for all $x\in W_{\alpha}$, this way:

\begin{equation}
\displaystyle \sum_{l=b_j}^{a_{j+1}-1} N_{ret}(\hat f^{n_l}(x))\psi(\hat f^{n_l}(x))\leq (n_{a_{j+1}}-n_{b_j})\psi(q),\label{final1}
\end{equation}
For the other term in expression (\ref{equacao12}) we have:

\begin{eqnarray}
& &\displaystyle \sum_{l=a_j}^{b_j-1} N_{ret}(\hat f^{n_l}(x))(\psi(\hat f^{n_l}(x))-\psi(\alpha))= \displaystyle \sum_{l=a_j}^{b_j-1} N_{ret}(q)\psi(\hat{f}^{n_l}(x))-(b_j-a_j)N_{ret}(q)\psi(\alpha) \nonumber \\
&\leq & (b_j-a_j) N_{ret}(q)\left( \psi(\alpha)+\frac{1}{b_j-a_j}[\psi(x_{\max})-\psi(\alpha) ] \right)-(b_j-a_j)N_{ret}(q)\psi(\alpha)\nonumber \\
&= & N_{ret}(q)(\psi(x_{\max})-\psi(\alpha))\leq \frac{\psi(\alpha)-\psi(q)}{2} \nonumber
\end{eqnarray}
which holds by lemma (\ref{lemadesig}) and by (\ref{alfa}) since $N_{ret}(q)< m_0$. Then
\begin{equation}
\displaystyle \sum_{l=a_j}^{b_j-1} N_{ret}(\hat f^{n_l}(x))\psi(\hat f^{n_l}(x)) \leq \frac{\psi(\alpha)-\psi(q)}{2}+(n_{b_j}-n_{a_j})\psi(\alpha). \label{final2}
\end{equation} Replacing (\ref{final1}) and (\ref{final2}) in (\ref{equacao12}) we have:

\begin{equation}
\frac{1}{n_{a_k}}\displaystyle \sum_{j=1}^{k-1}\left[ \psi(\alpha)\left(n_{b_j}-n_{a_j}-\frac{1}{2} \right)+\psi(q) \left( n_{a_{j+1}}-n_{b_j}+\frac{1}{2} \right) \right]
\end{equation}

by the manner $\alpha$ was chosen we have $\psi(\alpha)\geq \psi(q)$, so
\begin{eqnarray}
\frac{1}{n_{a_k}}\displaystyle \sum_{j=1}^{k-1}\left[ \psi(\alpha)\left(n_{b_j}-n_{a_j}-\frac{1}{2} \right)+\psi(q) \left( n_{a_{j+1}}-n_{b_j}+\frac{1}{2} \right) \right] \nonumber \\
\leq \frac{1}{n_{a_k}}\displaystyle \sum_{j=1}^{k-1} \psi(\alpha)(n_{a_{j+1}}-n_{a_j})=\psi(\alpha)-\frac{n_{a_1}}{n_{a_k}}\psi(\alpha)\rightarrow \psi(\alpha).\nonumber
\end{eqnarray}
concluding the proof of the proposition and the theorem

\end{proof}

\bibliographystyle{plain}
\bibliography{bibliografia}

\end{document}